\newtheorem{lem}{Lemma}[section]
\newtheorem{cor}[lem]{Corollary}
\newtheorem{thm}[lem]{Theorem}
\newtheorem{assumption}[lem]{Assumption}
\theoremstyle{definition}
\theoremstyle{remark}
\newtheorem{rem}[lem]{Remark}
\numberwithin{equation}{section}
\newcommand{\ep}{\varepsilon}
\newcommand{\ue}{u^\ep}
\newcommand{\n}{\nabla }
\newcommand{\eun}{\displaystyle{\frac{1}{\ep}}}
\newcommand{\R}{\mathbb{R}}
\newcommand{\vsp}{\vspace{8pt}}
\newcommand{\di}{\displaystyle}
\newcommand{\Pe}{(P^{\;\!\ep})}
\newcommand{\Pz}{(P^{\;\!\cl})}
\newcommand{\pt}{\ep|\ln\ep|m_1e^{m_2t}}
\newcommand{\cl}{c_\lambda}
\title{Sharp interface limit of the Fisher-KPP equation
when initial data have slow exponential decay}
\author{ }
\date{}
\begin{document}

\maketitle \vspace{-20 mm}

\begin{center}

{\large\bf Matthieu Alfaro }\\[1ex]
I3M, Universit\'e de Montpellier 2,\\
CC051, Place Eug\`ene Bataillon, 34095 Montpellier Cedex 5, France,\\[2ex]

{\large\bf Arnaud Ducrot }\\[1ex]
UMR CNRS 5251 IMB and INRIA Sud-Ouest ANUBIS, \\
Universit\'e de Bordeaux, 3, Place de la Victoire, 33000 Bordeaux France. \\[2ex]

\end{center}

\vspace{15pt}


\begin{abstract}

We investigate the singular limit, as $\ep \to 0$, of the Fisher
equation $\partial _t u=\ep \Delta u + \ep ^{-1}u(1-u)$ in the
whole space. We consider initial data with compact support plus
perturbations with {\it slow exponential decay}. We prove that the
sharp interface limit moves by a constant speed, which
dramatically depends on the tails of the initial data. By
performing a fine analysis of both the generation and motion of
interface, we provide a new estimate of the
thickness of the transition layers.\\

\noindent{\underline{Key Words:}} Fisher equation, singular
perturbation, generation of interface, motion of interface,
travelling waves, tails of the initial data. \footnote{AMS Subject
Classifications: 35K57, 35B25, 35R35, 35B50, 92D25.}
\end{abstract}

\section{Introduction}\label{s:intro}

In this work, we consider $\ue=\ue(t,x)$ the solution of the
rescaled Fisher-KPP equation
\[
 \Pe \quad\begin{cases}
 \partial _t \ue= \ep \Delta \ue+\eun \ue(1-\ue)&\text{in }(0,\infty)\times \R ^N  \vspace{3pt}\\
 \ue(0,x)=u_{0,\ep}(x)  &\text{in }\R ^N\,,
 \end{cases}
\]
with $\ep >0$ a small parameter, related to the thickness of a
diffuse interfacial layer. Let us recall that, in the classical
works of Fisher \cite{Fish} and Kolmogorov, Petrovsky and Piskunov
\cite{Kol-Pet-Pis}, the authors consider a smooth monostable
nonlinearity $f:[0,1]\to\R$ such that
$$
f(0)=f(1)=0\,,\quad 0<f(u)\leq f'(0)u\, \text{ for all }
u\in(0,1)\,.
$$
Our results would hold for such nonlinearities but, for the sake
of clarity, we restrict ourselves to the case where $f(u)=u(1-u)$.

In \cite{A-Duc} we have investigated the singular limit, as $\ep
\to 0$,  of $\Pe$ when initial data have compact support plus,
possibly, perturbations with a {\it fast exponential decay}. We
proved that the sharp interface limit moves by constant speed
which is the minimal speed $c^*=2$ of the underlying travelling
waves. We also obtained a new $\mathcal O (\ep |\ln \ep|)$
estimate for the thickness of the transition layers of the
solutions $\ue$.

The present paper is a completion of \cite{A-Duc}: we are
concerned with initial data with a {\it slow exponential decay}.
In this case, it turns out that the limit interface moves by a
speed which dramatically depends on the tails of the initial data.
We prove the convergence and again obtain a new $\mathcal O (\ep
|\ln \ep|)$ estimate for the thickness of the transition layers of
the solutions $\ue$.

\vskip 8pt We shall assume the following properties on the initial
data.
\begin{assumption}\label{H1}
We assume that $u_{0,\ep}=g+h_\ep$ where
\begin{itemize}
\item [(i)] $g$ is a bounded, nonnegative and compactly supported
function. We define $ \Omega_0:={\rm supp}\; g$.

\item [(ii)] We define $\widetilde g$ as the restriction of $g$ on
$\overline {\Omega _0}$ and assume that $\widetilde g$ is of the
class $C^2$.

\item [(iii)] $h_\ep$ is a  nonnegative function and there exist
$\lambda\in (0,1)$ and $0<m<M$ such that, for all $\ep >0$ small
enough,
\begin{equation*}
m e^{- \lambda\frac{|d(0,x)|}{\ep}}\leq h_\ep(x)\leq M e^{-
\lambda\frac{|d(0,x)|}{\ep}}\,,\;\;\forall x\in\R^N\,,
\end{equation*}
where $d(0,\cdot)$ denotes the cut-off signed distance function to
the ``initial interface" $\Gamma _0:=\partial \Omega_0$ (see
subsection \ref{ss:distance}).
\end{itemize}
\end{assumption}

\begin{rem}[Fast/Slow exponential decay]
In the fast exponential decay case considered in \cite{A-Duc},
$(iii)$ is replaced by $h_\ep(x)\leq M e^{-
\lambda\frac{\|x\|}{\ep}}$ for some $\lambda \geq 1$. In some
sense, see subsection \ref{ss:tw} for details, the exponential
decay of the initial data is faster than the exponential decay of
the underlying travelling wave of minimal speed $c^*$. In this
case, $\lambda \geq 1$ does not affect the asymptotic speed of the
limit interface which is always $c^*$.

In the present case $\lambda \in (0,1)$ we consider,
\eqref{plus-infini-U} indicates that the exponential decay of the
initial data and that of the underlying travelling wave of speed
$\lambda + \lambda ^{-1}$ are the same; it follows that the
construction of efficient sub-solutions is more involved than in
\cite{A-Duc}. Here, $\lambda \in(0,1)$ does affect the asymptotic
speed of the limit interface, which turns out to be $\lambda +
\lambda ^{-1}$.
\end{rem}

Note that the regularity assumption $(ii)$ can be relaxed. We
refer to Remark 1.8. in \cite{A-Duc}.

\begin{assumption}\label{H2}
We assume that $\Omega_0$ is convex.
\end{assumption}

\begin{assumption}\label{H3}
We assume the existence of $\delta >0$ such that, if $n$ denotes
the Euclidian unit normal vector exterior to the initial interface
$\Gamma _0$, then
\begin{equation}\label{pente}
\left| \frac{\partial \widetilde g}{\partial n}(y)\right|\geq
\delta \quad\text{ for all } y\in \Gamma _0\,.
\end{equation}
\end{assumption}
Assumption \ref{H1} gives the structure of our allowed initial
data. Note that Assumption \ref{H2} is used to find upper bounds
for the solutions $\ue$ (see Lemma \ref{lem:sup}) whereas
Assumption \ref{H3} is only used to derive the correspondence
\eqref{pente-bis}.

\vskip 8 pt Before going into much details, let us comment on
related known results. It is long known that the tails of the
initial data play a key role in the study of the long time
behavior of $u=u(t,x)$ the solution of the Fisher-KPP equation
$\partial _t u=\Delta u +u(1-u)$. As far as initial data with
exponential decay are concerned, we refer among others to
\cite{McK}, \cite{Bra}, \cite{Lau} for a probabilistic framework
and to \cite{Lar}, \cite{Rot}, \cite{Uch} for a reaction-diffusion
framework. More recently, Hamel and Roques \cite{Ham-Roq} studied
the case where the initial data decays more slowly than any
exponentially decaying function.

In a singular limit framework, the question of the convergence of
Problem $\Pe$ has been addressed when the initial data $u_{0,\ep}$
does not depend on $\ep$ and is compactly supported : first by
Freidlin \cite{Frie} using probabilistic methods and later by
Evans and Souganidis \cite{Ev-Soug} using Hamilton Jacobi technics
(in this framework we also refer to \cite{Bar-Eva-Sou,
Bar-Soug2}). In \cite{A-Duc}, we provide a new proof of
convergence for Problem $\Pe$ with fast exponentially decaying
initial data, by using specific reaction-diffusion tools such as
the comparison principle. Moreover, we obtain an $\mathcal O
(\ep|\ln \ep|)$ estimate of the thickness of the transition layers
of the solutions $\ue$. To the best of our knowledge, no such fine
estimate of the thickness of the transition layers existed for the
Fisher-KPP equation (in contrast with the Allen-Cahn equation).

\vskip 8pt  As $\ep \rightarrow 0$, by formally neglecting the
diffusion term, we see that, in the very early stage, the value of
$\ue$ quickly becomes close to either $1$ or $0$ in most part of
$\R^N$, creating a steep interface (transition layer) between the
regions $\{\ue\approx 1\}$ and $\{\ue\approx 0\}$. Once such an
interface develops, the diffusion term is large near the interface
comes to balance with the reaction term. As a result, the
interface ceases rapid development and starts to propagate in a
much slower time scale. Therefore the limit solution $\tilde u
(t,x)$ will be a step function taking the value $1$ on one side of
the moving interface, and $0$ on the other side.

We shall prove that this sharp interface, which we will denote by
$\Gamma ^{\cl}_t$, obeys the law of motion
\[
 \Pz\quad\begin{cases}
 \, V_{n}=\cl:=\lambda+\lambda^{-1}
 \quad \text { on } \Gamma ^{\cl}_t \vspace{3pt}\\
 \, \Gamma_t^{\cl}\big|_{t=0}=\Gamma_0\,,
\end{cases}
\]
where $V_n$ denotes the normal velocity of $\Gamma ^{\cl} _t$ in
the exterior direction. Note that
$\cl=\lambda+\lambda^{-1}>2=c^*$, with $c^*=2$ the minimal speed
of some related one-dimensional travelling waves (see subsection
\ref{ss:tw} for details). Therefore, as expected, the slower the
initial data decay, the larger is the the speed of the sharp
interface limit.

Since the region enclosed by $\Gamma _0$, namely $\Omega _0$, is
smooth and convex, Problem $\Pz$, possesses a unique smooth
solution on $[0,\infty)$, which we denote by $\Gamma
^{\cl}=\bigcup _{t\geq 0} (\{t\}\times\Gamma^{\cl}_t)$. Hereafter,
we fix $T>0$ and work on $(0,T]$.

For each $t\in (0,T]$, we denote by $\Omega ^{\cl} _ t$  the
region enclosed by the hypersurface $\Gamma ^{\cl} _t$. We define
a step function $\tilde u (t,x)$ by
\begin{equation}\label{u}
\tilde u  (t,x)=\begin{cases}
\, 1 &\text{ in } \Omega ^{\cl} _t\\
\, 0 &\text{ in } \R ^N \setminus \overline{\Omega ^{\cl} _t}
\end{cases} \quad\text{for } t\in(0,T]\,,
\end{equation}
which represents the asymptotic limit of $\ue$  as $\ep\to 0$.

\vskip 8 pt Our main result, Theorem \ref{th:results}, shows that,
after a short time of order $\mathcal O(\ep|\ln \ep|)$, the
solution $\ue$ quickly becomes close to 1 or 0, except in a small
neighborhood of the initial interface $\Gamma _0$, creating a
steep transition layer around $\Gamma _0$ ({\it generation of
interface}). The theorem then states that the solution $\ue$
remains close to the step function $\tilde u$ on the time interval
$[t^\ep,T]$ ({\it motion of interface}). Last, \eqref{resultat}
shows that, for any $0<a<1$, for all $t^\ep\leq t \leq T$, the
level-set $\Gamma _t ^\ep (a):=\{x\in\R ^N:\, \ue(t,x)=a\}$ lives
in an $\mathcal O (\ep|\ln \ep|)$ tubular neighborhood of the
limit interface $\Gamma _t ^{\cl}$. In other words, we provide a
new estimate of the {\it thickness of the transition layers} of
the solutions $\ue$.

\begin{thm}[Generation, motion and thickness of interface]\label{th:results}
Let Assumptions \ref{H1}, \ref{H2} and \ref{H3} be satisfied. Then
there exist positive constants $\alpha$ and $\mathcal C$ such
that, for all $\ep >0$ small enough, for all $t^\ep \leq t \leq
T$, where
$$
t^\ep:=\alpha \ep  |\ln \ep|\,,
$$
we have
\begin{equation}\label{resultat}
\ue(t,x) \in
\begin{cases}
\,[0,1+\ep]\quad&\text{if}\quad
x\in\mathcal N_{\mathcal C\ep|\ln\ep|}(\Gamma ^{\cl}_t)\\
\,[1-2\ep,1+\ep]\quad&\text{if}\quad x\in\Omega ^{\cl} _
t\setminus\mathcal N_{\mathcal C\ep|\ln\ep|}(\Gamma ^{\cl}
_t)\\
\,[0,\ep]\quad&\text{if}\quad x\in (\R^N\setminus \overline{\Omega
^{\cl}_t})\setminus\mathcal N_{\mathcal C\ep|\ln\ep|}(\Gamma
^{\cl} _t)\,,
\end{cases}
\end{equation}
where $\mathcal N _r(\Gamma ^{\cl} _t):=\{x\in \R ^N:\,
dist(x,\Gamma ^{\cl} _t)<r\}$ denotes the tubular $r$-neighborhood
of $\Gamma ^{\cl}_t$.
\end{thm}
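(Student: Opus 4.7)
The plan is to establish \eqref{resultat} by the parabolic comparison principle, splitting $[0,T]$ into a \emph{generation phase} $[0,t^\ep]$ and a \emph{motion phase} $[t^\ep,T]$, and building explicit sub- and super-solutions of $\Pe$ on each. On $[0,t^\ep]$ I would follow the strategy developed in \cite{A-Duc}: use barriers inspired by the ODE $\dot Y = Y(1-Y)$ in the fast time $\tau = t/\ep$ to show that, for $\alpha$ large enough, at time $t^\ep = \alpha \ep |\ln \ep|$ the solution $\ue$ is already $\mathcal{O}(\ep)$-close to $1$ on a slight inward retraction of $\Omega_0$ while preserving in the exterior a lower exponential tail of the form $m'\exp(-\lambda d(0,x)/\ep)$ to be handed over to the motion phase. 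Assumption \ref{H3}, via \eqref{pente-bis}, is what guarantees that after the ODE drift the solution sits above a genuine plateau in a neighbourhood of $\Gamma_0$.

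For the motion phase, let $U=U_\lambda$ denote the $1$-D travelling wave of speed $\cl = \lambda+\lambda^{-1}$ satisfying $-U''-\cl U'=U(1-U)$ with $U(-\infty)=1$, $U(+\infty)=0$, and the sharp asymptotic $U(\xi)\sim A e^{-\lambda\xi}$ captured by \eqref{plus-infini-U}. Denoting by $d^{\cl}(t,x)$ the signed distance to $\Gamma^{\cl}_t$ (positive outside $\Omega^{\cl}_t$), the natural ansatz for super- and sub-solutions is
\[
w^\pm(t,x) = U\!\left(\frac{d^{\cl}(t,x)}{\ep} \mp \beta t \mp \gamma\right) \pm \ep\, q(t,x),
\]
with constants $\beta,\gamma>0$ and a bounded corrector $q$ to be tuned. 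Inserting $w^\pm$ into $\Pe$ and invoking the travelling-wave equation for $U$, the eikonal identity $|\n d^{\cl}|=1$, the front-velocity relation $\partial_t d^{\cl}=-\cl$, and the boundedness of $\Delta d^{\cl}$ (ensured by the convexity in Assumption \ref{H2} and the smoothness of $\Gamma^{\cl}$ on $[0,T]$), the parabolic residual reduces to $U'$-weighted curvature and lower-order terms that the free parameters $\beta,\gamma,q$ can absorb.

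The real obstacle, as emphasised in the second remark, is the \emph{sub}-solution. Because $h_\ep$ and $U$ share the same exponential decay rate $\lambda$, a naive downward shift of $U$ cannot be dominated pointwise by the prescribed initial tail, and the perturbative construction of \cite{A-Duc} (which exploited a strictly faster decay of $h_\ep$) is no longer available. One needs a refined profile that, in the far field, agrees with the pure exponential $m' e^{-\lambda d(0,x)/\ep}$ up to the optimal prefactor, matches the travelling-wave shape in an intermediate zone near $\Gamma^{\cl}_t$, and still satisfies the differential inequality across the matching zone. Verifying this compatibility is the technical heart of the proof and the specific contribution of the present paper with respect to the fast-decay case.

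Once both pairs of barriers are in force, the comparison principle propagates the sandwich inequalities from $t=t^\ep$ to the whole interval $[t^\ep,T]$. The thickness estimate then follows from the elementary observation that $U(\xi)\le \ep$ precisely when $\xi\gtrsim \lambda^{-1}|\ln\ep|$, with a symmetric statement $U(\xi)\ge 1-\ep$ for $\xi\lesssim -|\ln\ep|$; after rescaling by $\ep$ and absorbing the bounded shifts $\beta t+\gamma$, these turn into an $\mathcal{O}(\ep|\ln\ep|)$ tubular neighbourhood of $\Gamma_t^{\cl}$ in which the three-case dichotomy \eqref{resultat} holds, for a suitably chosen constant $\mathcal{C}$.
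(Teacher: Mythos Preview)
Your overall architecture---generation on $[0,t^\ep]$, motion on $[t^\ep,T]$, then comparison---matches the paper, and you correctly flag the sub-solution as the crux. But two of your concrete constructions diverge from what the paper actually does, and in one place your ansatz would not close.

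\textbf{Super-solution.} The paper does \emph{not} build a curved-front super-solution of the form $U(d^{\cl}/\ep-\beta t-\gamma)+\ep q$. Instead it exploits convexity (Assumption~\ref{H2}) directly: for each $x_0\in\Gamma_0$ with outer normal $n_0$, the planar multiple
\[
u^+(t,x)=\widehat K\,U\!\left(\frac{(x-x_0)\cdot n_0-\cl t}{\ep}\right),\qquad \widehat K>1,
\]
is an \emph{exact} super-solution (one-line check: $\ep\mathcal L^\ep[u^+]=\widehat K(\widehat K-1)U^2\ge 0$), valid from $t=0$, and dominates $u_{0,\ep}$ because convexity gives $d(0,x)\ge (x-x_0)\cdot n_0$ on the outer half-space. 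Taking the infimum over $x_0$ yields \eqref{super-sol}. This sidesteps all curvature estimates and the need to order data at $t^\ep$; your curved ansatz could in principle be made to work but is strictly harder.

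\textbf{Sub-solution corrector.} Your placeholder ``bounded corrector $q$'' is where the argument would break. In the tail $z\to+\infty$ the reaction term contributes $q(2U-1)\approx -q$, which a generic bounded $q$ cannot absorb since $U'$ decays there as well. The paper's device is to take the corrector itself to be a travelling wave: set
\[
u^-(t,x)=U(z)-\ep\,V(z),\qquad z=\frac{d(t,x)+\ep|\ln\ep|\,m_1e^{m_2 t}}{\ep},
\]
where $V$ is the wave of the slightly \emph{slower} speed $c_\ep=\cl-\ep|\ln\ep|$. Because $V$ satisfies its own ODE, the reaction residual collapses to $-\ep V(2U-V)+\ep^2V^2$, which in the tail is $O(\ep e^{-\lambda z})$ and is beaten by the drift term $U'(z)\cdot m_2\ep|\ln\ep|m_1e^{m_2 t}=O(\ep|\ln\ep|e^{-\lambda z})$. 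Note also that the drift is $\ep|\ln\ep|m_1e^{m_2 t}$ (order $|\ln\ep|$ in the $z$-variable), not a bounded $\beta t+\gamma$; an $O(1)$ shift would not suffice to slip $u^-(0,\cdot)$ under the generation output, which only guarantees a plateau for $d(0,x)\le -k\ep|\ln\ep|$. The fact that $V$ decays strictly faster than $U$ (since $\mu_\ep>\lambda$) is what makes both the differential inequality and the initial ordering (Lemma~\ref{lem:ordering}) go through.

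So: your outline is sound, but the paper's two specific tricks---the $\widehat K U$ planar super-solution and the $U-\ep V$ two-wave sub-solution with logarithmic drift---are precisely the missing ingredients you defer on.
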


As a immediate consequence of the above Theorem, we collect the
convergence result.

\begin{cor}[Convergence]\label{cor:cv}Let Assumptions \ref{H1}, \ref{H2}
and \ref{H3} be satisfied. Then, as $\ep\to 0$, $\ue$ converges to
$\tilde u $ everywhere in $\bigcup _{0< t\leq T}\left(\{ t\}\times
\Omega ^{\cl} _t\right)$ and $\bigcup _{0< t \leq T}\left(\{
t\}\times (\R^N \setminus \overline{\Omega ^{\cl} _t})\right)$.
\end{cor}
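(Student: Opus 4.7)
The plan is to deduce the corollary as a direct pointwise unpacking of Theorem \ref{th:results}: for any $(t_0,x_0)$ in one of the two open regions under consideration, I would verify that, for $\ep$ small enough, $(t_0,x_0)$ falls into the corresponding controlled regime of \eqref{resultat}, so that $\ue(t_0,x_0)$ is squeezed to the correct value of $\tilde u(t_0,x_0)$.

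More concretely, fix $0<t_0\leq T$ and $x_0\in\Omega^{\cl}_{t_0}$. Since $\Gamma^{\cl}=\bigcup_{t\geq 0}(\{t\}\times\Gamma^{\cl}_t)$ is a smooth evolving hypersurface and $\Omega^{\cl}_{t_0}$ is its enclosed open region, the distance $r:=\mathrm{dist}(x_0,\Gamma^{\cl}_{t_0})>0$. Now choose $\ep>0$ small enough that both
\[
t^\ep=\alpha\ep|\ln\ep|<t_0\qquad\text{and}\qquad \mathcal C\,\ep|\ln\ep|<r.
\]
The first inequality brings $t_0$ into the time interval $[t^\ep,T]$ on which Theorem \ref{th:results} applies, and the second places $x_0$ outside the tubular neighborhood $\mathcal N_{\mathcal C\ep|\ln\ep|}(\Gamma^{\cl}_{t_0})$. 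The middle line of \eqref{resultat} then yields $\ue(t_0,x_0)\in[1-2\ep,1+\ep]$, which forces $\ue(t_0,x_0)\to 1=\tilde u(t_0,x_0)$ as $\ep\to 0$.

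The symmetric case $x_0\in\R^N\setminus\overline{\Omega^{\cl}_{t_0}}$ is handled identically: this set is also open, so again $r:=\mathrm{dist}(x_0,\Gamma^{\cl}_{t_0})>0$, and for $\ep$ small the third line of \eqref{resultat} gives $\ue(t_0,x_0)\in[0,\ep]\to 0=\tilde u(t_0,x_0)$.

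There is no real obstacle here: the corollary is a routine consequence of the quantitative estimate \eqref{resultat}, requiring only the observation that $t^\ep\to 0$ and $\mathcal C\ep|\ln\ep|\to 0$ as $\ep\to 0$, so that any fixed $(t_0,x_0)$ strictly inside one of the two open regions eventually lies in the ``good'' regime. I would note in passing that the argument in fact yields local uniform convergence on compact subsets of each open region, since $r$ can be chosen uniform on such compacts, although this is not needed for the statement as given.
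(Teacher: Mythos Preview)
Your proposal is correct and is exactly the ``immediate consequence of the above Theorem'' that the paper asserts without further detail. You have simply spelled out the routine pointwise argument: for fixed $(t_0,x_0)$ away from the interface, both $t^\ep\to 0$ and $\mathcal C\ep|\ln\ep|\to 0$ eventually place $(t_0,x_0)$ in the appropriate regime of \eqref{resultat}, and the bounds there force convergence to the correct value of $\tilde u$.
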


\vskip 8 pt The organization of this paper is as follows. In
Section \ref{s:materials}, we present the basic tools that will be
used in later sections for the construction of sub- and
super-solutions. In Section \ref{s:sub} we construct two
sub-solutions, one for small times (during the generation of
interface) and one for later times (during the motion of
interface).  Section \ref{s:super} is devoted to the construction
of a single super-solution which is efficient to study both the
generation and the motion of interface. Last, in Section
\ref{s:proof}, by using our different sub- and super-solutions we
prove Theorem \ref{th:results}.

\section{Materials}\label{s:materials}

The needed tools are the same than in \cite{A-Duc}. For the
self-containedness of the present paper we recall them here. Let
us note than the non monotone travelling waves of speed $0<c<c^*$
used in \cite{A-Duc} are useless here.

\subsection{A monostable ODE}\label{ss:ode}

The generation of interface is strongly related to the dynamical
properties of the ordinary differential equation associated to
$\Pe$, that is
\begin{equation*}
\frac{dz(t)}{dt}=z(t)(1-z(t))\,,\;\;t>0\,.
\end{equation*}
In the sequel, for technical reasons we shall apply the semiflow
generated by the above dynamical system to negative initial data.
In order to have some good dynamical properties, let us modify the
monostable nonlinearity $u\to u(1-u)$  on $(-\infty,0)$ so that
the modified function, we call it $\bar f:\R\to\R$, is of the
class $C^2$ and enjoys the bistable assumptions. More precisely,
$\bar f$ has exactly three zeros $-1<0<1$ and
\begin{equation}
{\bar f}'(-1)<0\,, \qquad \bar f'(0)=1>0\,, \qquad \bar
f'(1)=-1<0\,.
\end{equation}
Note that $\bar f(u)=u(1-u)$ if $u\geq 0$. As done in Chen
\cite{C1}, we consider $\bar f _\ep$ a slight modification of
$\bar f$ defined by
$$
\bar f
_\ep(u):=\psi(u)\frac{u-\ep|\ln\ep|}{|\ln\ep|}+(1-\psi(u))\bar
f(u)\,,
$$
with $\psi$ a smooth cut-off function satisfying conditions
(29)---(32) as they appear in \cite{HKLM}. As explained in
\cite{HKLM},
\begin{equation}\label{fep}
\bar f _ \ep (u) \leq \bar f(u)\quad \text{ for all }u\in\R\,.
\end{equation}

Then we defined $w(s,\xi)$ as the semiflow generated by the
ordinary differential equation
\begin{equation}\label{ode}
\begin{cases}
\di{\frac{dw}{ds}}(s,\xi)=\bar f_ \ep (w(s,\xi))\,,\;\;s>0\,,\vsp\\
w(0,\xi)=\xi\,.
\end{cases}
\end{equation}
Here $\xi$ ranges over the interval
$[-\|g\|_{\infty}-M-1,\|g\|_{\infty}+M+1]$. We claim that
$w(s,\xi)$ has the following properties (for proofs, see \cite{C1}
or \cite{HKLM}).

\begin{lem}[Behavior of $w$]\label{lem:w}The following
holds for all $\xi \in[-\|g\|_{\infty}-M-1,\|g\|_{\infty}+M+1]$.
\begin{itemize}
\item [(i)] $\text{ If}\quad \xi \geq \ep|\ln \ep| \quad\text{ then}\quad w(s,\xi)\geq\ep|\ln\ep|>0 \quad\text{ for all } s>0\,.$\\
$\text{ If}\quad \xi <0 \quad\text{ then}\quad w(s,\xi)<0
\quad\text{ for all } s>0\,.$\\
$\text{ If}\quad \xi \in(0,\ep|\ln\ep|) \quad\text{ then}\quad
w(s,\xi)>0 \quad\text{ for all } s\in(0,s_\ep(\xi)), \text{ with
}$
$$
s_\ep(\xi):=|\ln\ep|\left|\ln\left(1-\frac{\xi}{\ep|\ln\ep}\right)\right|\,.
$$
 \item [(ii)] $w(s,\xi) \in
(-\|g\|_{\infty}-M-1,\|g\|_{\infty}+M+1)\quad\text{ for all }
s>0\,.$ \item [(iii)] $w$ is of the class $C^2$ with respect to
$\xi$ and $$ w_\xi (s,\xi)>0\quad\text{ for all } s>0\,.$$

 \item [(iv)]For all $a>0$, there exists a constant $C(a)$ such that
$$
 \left|\di{\frac{w_{\xi\xi}}{w_\xi}(s,\xi)}\right|\leq \frac {C(a)} \ep \quad\text{ for all }
 0<s\leq a|\ln \ep|\,.
 $$
\item [(v)] There exists a positive constant $\alpha$ such that,
for all $s\geq \alpha |\ln \ep|$, we have
$$
\text{ if }\quad \xi \in[\ep|\ln\ep|,\|g\|_{\infty}+M+1]
\quad\text{ then }\quad 0<w(s,\xi)\leq 1+\ep\,,
$$
and
$$
\text{ if }\quad \xi \in[3\ep|\ln\ep|,\|g\|_{\infty}+M+1]
\quad\text{ then }\quad 1-\ep \leq w(s,\xi)\,.
$$
\end{itemize}
\end{lem}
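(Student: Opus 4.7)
The plan is to derive all five assertions from a phase-line analysis of $w' = \bar f_\ep(w)$, exploiting three structural features of $\bar f_\ep$: its three zeros lie near $-1$, at $\ep|\ln\ep|$ (shifted up from $0$), and near $1$; it is affine on the cut-off region where $\psi\equiv 1$; and it retains the bistable sign pattern of $\bar f$ elsewhere. In the cut-off region the ODE reduces to $w' = (w - \ep|\ln\ep|)/|\ln\ep|$, which integrates explicitly to
$$w(s,\xi) = \ep|\ln\ep| + (\xi - \ep|\ln\ep|)\, e^{s/|\ln\ep|}.$$
From this I would read off (i): the equilibrium at $\ep|\ln\ep|$ traps trajectories above it; the strict negativity of $\bar f_\ep$ on $(-1,\ep|\ln\ep|)$, together with the stable equilibrium near $-1$ bounding trajectories from below, keeps any $w(s,\xi)$ issued from $\xi<0$ strictly negative; and setting $w(s,\xi)=0$ in the explicit formula yields the stated expression for $s_\ep(\xi)$. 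Property (ii) is then forward-invariance of the open interval, verified by noting that $\bar f_\ep<0$ at the right endpoint and $\bar f_\ep>0$ at the left, since both lie outside $[-1,1]$ and outside the support of $\psi$.

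Property (iii) is classical smooth dependence on initial data: differentiating the ODE in $\xi$ gives $\partial_s w_\xi = \bar f_\ep'(w)\, w_\xi$ with $w_\xi(0,\xi)=1$, hence
$$w_\xi(s,\xi) = \exp\!\left(\int_0^s \bar f_\ep'(w(\tau,\xi))\, d\tau\right) > 0.$$
Differentiating once more and applying Duhamel to $\partial_s w_{\xi\xi} = \bar f_\ep'(w)\, w_{\xi\xi} + \bar f_\ep''(w)\, w_\xi^2$ with $w_{\xi\xi}(0,\xi)=0$ produces the representation
$$\frac{w_{\xi\xi}(s,\xi)}{w_\xi(s,\xi)} = \int_0^s \bar f_\ep''(w(\tau,\xi))\, w_\xi(\tau,\xi)\, d\tau.$$
For (iv) the key observation is that $\bar f_\ep''$ vanishes identically on the cut-off region (where $\bar f_\ep$ is affine), so the integrand is supported in the narrow transition zones of $\psi$; the pointwise bound $\bar f_\ep'' = O(1/\ep)$ and the small measure of those zones, both inherited from conditions (29)--(32) of \cite{HKLM}, combine with the uniform control of $w_\xi$ issued from (iii) on the window $s \leq a|\ln\ep|$ to yield the claimed bound $C(a)/\ep$.

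The main obstacle is precisely this estimate in (iv): since $w_\xi$ may a-priori grow like $e^{s/|\ln\ep|}$ on the cut-off region, one must genuinely use both the vanishing of $\bar f_\ep''$ inside that region and the quantitative bounds on the transition zones of $\psi$ constructed in \cite{C1}. Property (v) follows from the long-time contraction near the upper stable equilibrium $u=1$: since $\bar f_\ep'(1) = -1$ in a neighborhood where the cut-off is inactive, sandwiching the trajectory between the super-solution $1 + Ce^{-s/2}$ and an analogous sub-solution shows that every $\xi \in [\ep|\ln\ep|, \|g\|_\infty + M + 1]$ lands in $(0, 1+\ep]$ after time $\alpha|\ln\ep|$ for some universal $\alpha > 0$; the sharper two-sided estimate for $\xi \geq 3\ep|\ln\ep|$ is obtained by using the explicit formula from (i) to quantify the escape time from the slow region $[\ep|\ln\ep|, 3\ep|\ln\ep|]$ and then invoking the same contraction near $1$.
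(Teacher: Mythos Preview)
The paper does not prove this lemma at all: immediately before the statement it says ``for proofs, see \cite{C1} or \cite{HKLM}.'' Your sketch follows essentially the same route as those references --- phase-line analysis of the autonomous ODE, explicit integration on the region where $\psi\equiv 1$ and $\bar f_\ep$ is affine, and the variational equations for $w_\xi$ and $w_{\xi\xi}$ leading to the integral representation
\[
\frac{w_{\xi\xi}}{w_\xi}(s,\xi)=\int_0^s \bar f_\ep''\big(w(\tau,\xi)\big)\,w_\xi(\tau,\xi)\,d\tau.
\]
So as far as comparison with the paper goes, there is nothing to compare: you have supplied what the paper outsources.

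One genuine inaccuracy in your treatment of (iv) should be fixed, however. You assert that the integrand above ``is supported in the narrow transition zones of $\psi$'', but this is false: in the outer region where $\psi\equiv 0$ one has $\bar f_\ep=\bar f$ and hence $\bar f_\ep''=\bar f''$, which is not identically zero. The argument in \cite{C1,HKLM} splits the time interval into three pieces: the portion of the trajectory inside the affine region (where indeed $\bar f_\ep''=0$ and there is no contribution, while $w_\xi$ grows only like $e^{s/|\ln\ep|}\le e^a$); the portion in the outer region $\{\psi\equiv 0\}$, which has \emph{bounded duration} because $|\bar f_\ep|=|\bar f|$ is bounded below away from the stable equilibria, so $\bar f_\ep''=O(1)$ and $w_\xi=O(1)$ give an $O(1)$ contribution; and the transition zones. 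If $\psi$ is taken $\ep$-independent (as the phrasing in the paper suggests), then $\bar f_\ep''$ is uniformly bounded rather than $O(1/\ep)$, so the $1/\ep$ in the final bound is \emph{not} coming from a pointwise blow-up of $\bar f_\ep''$; it is simply a (very generous) upper bound for the quantity that this decomposition shows is actually $O(|\ln\ep|)$ or better. Your heuristic ``$\bar f_\ep''=O(1/\ep)$ times small measure'' misidentifies the mechanism.
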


\subsection{Travelling waves}\label{ss:tw}

\vsp A travelling wave is a couple $(c,U)$ with $c>0$ and $U\in
C^2(\R,\R)$ a function such that
\begin{equation}\label{tw}
\begin{cases}
 {U}''(z)+cU'(z)+U(z)(1-U(z))=0\quad \text{ for all } z\in \R\\
 U(-\infty)=1\\
 U(\infty)=0\,.
\end{cases}
\end{equation}
Define $c^*:=2$. Then, for all $c\geq c^*$ there exists a unique
(up to a translation in $z$) travelling wave denoted by $(c,U)$.
It is positive and monotone.

\begin{lem}[Behavior of $U$]\label{lem:beh-U}Let $c>2=c^*$ be arbitrary and consider the associated
travelling wave $(c,U)$. Then there exist constants $C>0$ and
$0<r<R$ such that
\begin{equation}\label{moins-infini}
r e^{-\eta |z|} \leq 1-U(z)\leq R e^{-\eta |z|} \quad \text{ for }
z\leq 0\,,
\end{equation}
\begin{equation}\label{plus-infini}
r e^{-\mu  z}\leq U(z)\leq R e^{-\mu z} \quad \text{ for } z\geq
0\,,
\end{equation}
\begin{equation}\label{moins-infini-der}
re^{-\eta |z|}\leq |U'(z)|+|U''(z)|\leq Re^{-\eta |z|} \quad
\text{ for } z\leq 0\,,
\end{equation}
\begin{equation}\label{plus-infini-der}
re^{-\mu |z|}\leq |U'(z)|+|U''(z)|\leq Re^{-\mu |z|} \quad \text{
for } z\geq 0\,,
\end{equation}
with $\eta>0$ the positive root of equation $\eta ^2+c\eta -1=0$
and $\mu>0$ the smallest root of equation $\mu ^2-c\mu +1=0$.
\end{lem}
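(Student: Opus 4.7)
The plan is to view the travelling wave ODE $U''+cU'+U(1-U)=0$ as a planar dynamical system in the $(U,U')$-plane with equilibria $(0,0)$ and $(1,0)$, and to extract the exponential rates of decay from the linearizations at these two points.

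For the behavior at $+\infty$, I would linearize near $(0,0)$. The characteristic polynomial is $\lambda^2+c\lambda+1=0$, whose roots are $-\mu_-$ and $-\mu_+$, where $0<\mu_-<\mu_+$ are the two roots of $\mu^2-c\mu+1=0$ (real and distinct since $c>c^*=2$). Thus $(0,0)$ is a stable node with a two-dimensional stable manifold, and by the stable-manifold theorem every trajectory entering $(0,0)$ does so exponentially. Using that the KPP wave of speed $c>c^*$ is the unique (up to translation) monotone positive wave, one shows that this specific trajectory is tangent at $(0,0)$ to the slow eigenvector, which corresponds to the rate $\mu:=\mu_-$. This gives $U(z)\sim A e^{-\mu z}$ and $U'(z)\sim -A\mu e^{-\mu z}$ with $A>0$, from which \eqref{plus-infini} follows on a half-line $[z_0,\infty)$ and extends to $[0,\infty)$ by continuity. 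The relation $U''=-cU'-U(1-U)$ then yields \eqref{plus-infini-der} at the same rate.

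For the behavior at $-\infty$, I would set $V:=1-U$ so $V\to 0$ as $z\to -\infty$, which transforms the ODE into $V''+cV'-V+V^2=0$. Its linearization at $V=0$ has characteristic polynomial $\eta^2+c\eta-1=0$, with one positive root (the $\eta$ of the statement) and one negative root, so $(1,0)$ is a saddle for the original system. The travelling wave corresponds to the branch of its one-dimensional unstable manifold leaving into the strip $0<U<1$, and standard saddle theory delivers $V(z)\sim C e^{\eta z}$ with $C>0$ as $z\to-\infty$. This gives \eqref{moins-infini}, while $V''=-cV'+V-V^2$ controls $V'$ and $V''$ at the same exponential rate and yields \eqref{moins-infini-der}.

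The main obstacle will be ruling out the ``fast'' decay rate $\mu_+$ at $+\infty$: the stable manifold of $(0,0)$ contains a one-parameter family of orbits tangent to the slow eigenvector and one exceptional orbit tangent to the fast one, and one must show that the KPP wave belongs to the generic slow family. Two possible routes: a shooting argument that follows the unstable manifold of $(1,0)$ forward in $z$ and identifies the eigendirection of its entry into $(0,0)$; or a direct ODE argument assuming $U(z)=O(e^{-\mu_+ z})$, projecting onto the slow eigenspace, and using positivity and monotonicity of $U$ to reach a contradiction. Once the correct rate at each end is identified, the two-sided bounds in \eqref{moins-infini}--\eqref{plus-infini-der} follow by choosing $0<r<R$ bracketing the prefactors $A$ and $C$ on half-lines $|z|\geq z_0$, and then globally by adjusting these constants using continuity on the compact region $|z|\leq z_0$.
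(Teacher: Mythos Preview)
The paper does not actually prove this lemma: immediately after stating it, the authors write ``We refer the reader to \cite{Aro-Wei1,Volpert-Volpert-Volpert} and the references therein for more details.'' So there is no in-paper proof to compare against.

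Your proposed argument is the standard one and is essentially what one finds in those references: reduce to a planar system, linearize at the two equilibria, read off the exponents $\eta$ and $\mu$ from the characteristic polynomials, and use the stable/unstable manifold theorems to get the precise asymptotics. Your identification of the only nontrivial point --- that for $c>c^*$ the monotone KPP wave enters $(0,0)$ along the \emph{slow} eigendirection, giving the rate $\mu=\mu_-$ rather than $\mu_+$ --- is correct, and both routes you mention (shooting from the unstable manifold of $(1,0)$, or assuming fast decay and deriving a contradiction from positivity/monotonicity) are viable. The extension from half-line asymptotics to two-sided bounds on $[0,\infty)$ and $(-\infty,0]$ by compactness is routine, and deducing the derivative bounds from the ODE itself is immediate. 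Nothing is missing.
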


We refer the reader to \cite{Aro-Wei1,Volpert-Volpert-Volpert} and
the references therein for more details.

\subsection{Cut-off signed distance functions}\label{ss:distance}

Recall that $\Gamma ^{\cl}=\bigcup _{t\geq 0}
(\{t\}\times\Gamma^{\cl}_t)$ is the smooth solution of the free
boundary problem $(P^{\cl})$ and that, for each $t>0$, $\Omega
^{\cl} _ t$  is the region enclosed by the hypersurface $\Gamma
^{\cl} _t$.

Let $\widetilde d (t,\cdot)$ be the signed distance function to
$\Gamma ^{\cl} _t$ defined by
\begin{equation}\label{eq:dist}
\widetilde d (t,x)=
\begin{cases}
-&\hspace{-10pt}\mbox{dist}(x,\Gamma ^{\cl} _t)\quad\text{ for }x\in\Omega ^{\cl}  _t \\
&\hspace{-10pt} \mbox{dist}(x,\Gamma ^{\cl} _t) \quad \text{ for }
x\in\R ^N \setminus \Omega ^{\cl} _t\,.
\end{cases}
\end{equation}
We remark that $\widetilde d=0$ on $\Gamma ^{\cl}$ and that
$|\nabla \widetilde d|=1$ in a neighborhood of $\Gamma ^{\cl}$.

We now introduce the ``cut-off signed distance function" $d$,
which is defined as follows. Recall that $T>0$ is fixed. First,
choose $d_0>0$ small enough so that $\widetilde d$ is smooth in
the tubular neighborhood of $\Gamma ^{\cl}$
\[
 \{(t,x) \in [0,T] \times \R ^N:\;\;|\widetilde{d}(t,x)|<4
 d_0\}\,.
\]
Next let $\zeta(s)$ be a smooth function satisfying
\begin{equation}\label{zeta}
0\leq\zeta'(s)\leq 1\quad\text{ for all } s\in\R\,,
\end{equation}
such that
\[
 \zeta(s)= \left\{\begin{array}{ll}
 s &\textrm{ if }\ |s| \leq d_0\vspace{4pt}\\
 -2d_0 &\textrm{ if } \ s \leq -3d_0\vspace{4pt}\\
 2d_0 &\textrm{ if } \ s \geq 3d_0\,.
 \end{array}\right.
\]
We then define the cut-off signed distance function $d$ by
\begin{equation}
d(t,x):=\zeta\left(\tilde{d}(t,x)\right)\,.
\end{equation}

Note that
\begin{equation}\label{norme-un}
\text{ if } \quad |d(t,x)|< d_0 \quad \text{ then }\quad |\nabla
d(t,x)|=1\,,
\end{equation}
and that the equation of motion $(P^{\cl})$ yields
\begin{equation}\label{interface}
\text{ if } \quad |d(t,x)|< d_0 \quad \text{ then }\quad
 \partial _t d(t,x)+\cl=0\,.
\end{equation}
Then the mean value theorem provides a constant $A>0$ such that
\begin{equation}\label{MVT}
|\partial _t d(t,x)+\cl|\leq A|d(t,x)| \quad \textrm{ for all }
(t,x) \in [0,T]\times \R ^N\,.
\end{equation}
Moreover, there exists a constant $C>0$ such that
\begin{equation}\label{est-dist}
|\nabla d (t,x)|+|\Delta d (t,x)|\leq C\quad \textrm{ for all }
(t,x) \in [0,T]\times \R ^N\,.
\end{equation}

\section{Sub-solutions}\label{s:sub}

As explained before, the construction of efficient sub-solutions
is more involved than in the fast exponential decay case
considered in \cite{A-Duc}. We start by constructing refined
sub-solutions for small times.

\subsection{Generation of interface}\label{ss:generation}

Let us recall that $m e^{- \lambda\frac{|d(0,x)|}{\ep}}\leq
h_\ep(x)\leq M e^{- \lambda\frac{|d(0,x)|}{\ep}}$ for all $
x\in\R^N$. We then define the map
\begin{equation}\label{sub-sol}
\underline{u}(t,x):=\max\left\{\tilde m e^{- \lambda
\frac{|d(0,x)|}{\ep}}, w\left(\frac{t}{\ep}\,,
g(x)-Kt\right)\right\},
\end{equation}
where $w(s,\xi)$ is the solution of the ordinary differential
equation \eqref{ode} and where $\tilde m=\min(\frac 1 2 ,m)$. The
constant $K>0$ is  to be specified below.

\begin{lem}[Sub-solutions for small times]\label{lem:sub-small}
Let Assumption \ref{H1} be satisfied. Then for all $a>0$, there
exists $K>0$ such that, for all $\ep >0$ small enough, we have
\begin{equation}\label{sub-sol2}
\underline{u}(t,x)\leq u^\ep(t,x)\,,\;\;\forall t\in [0,a \ep
|\ln\ep|]\,,\; \forall x\in \mathbb R^N\,.
\end{equation}
\end{lem}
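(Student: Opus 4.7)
The strategy is the parabolic comparison principle applied to $\underline u$ viewed as a sub-solution of $\Pe$: one must verify (i) the initial ordering $\underline u(0,\cdot)\le u_{0,\ep}(\cdot)$, and (ii) that $\underline u$ satisfies the differential sub-solution inequality in a sense strong enough for comparison. The initial ordering is immediate: since $\tilde m\le m$, the first slot of the $\max$ is bounded by $m e^{-\lambda|d(0,x)|/\ep}\le h_\ep(x)\le u_{0,\ep}(x)$ via Assumption~\ref{H1}(iii), while the second slot satisfies $w(0,g(x))=g(x)\le g(x)+h_\ep(x)=u_{0,\ep}(x)$ since $h_\ep\ge 0$. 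For the sub-solution inequality itself I check it separately on each slot of the $\max$ and invoke the standard ``max of sub-solutions is a sub-solution'' principle. Note that $\underline u\ge \tilde m e^{-\lambda|d(0,x)|/\ep}>0$ everywhere, so the inequality never needs to be analysed on $\{v_2<0\}$.

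\textbf{ODE slot.} For $v_2(t,x):=w(t/\ep,g(x)-Kt)$, the chain rule combined with the defining ODE \eqref{ode} gives
\[
\partial_t v_2-\ep\Delta v_2-\tfrac{1}{\ep}v_2(1-v_2)=\tfrac{1}{\ep}\bigl(\bar f_\ep(v_2)-v_2(1-v_2)\bigr)-w_\xi\Bigl(K+\ep\Delta g+\ep\tfrac{w_{\xi\xi}}{w_\xi}|\nabla g|^2\Bigr).
\]
On $\{v_2\ge 0\}$ the first group is non-positive by \eqref{fep} combined with $\bar f(u)=u(1-u)$ for $u\ge 0$. Lemma~\ref{lem:w}(iii)--(iv) gives $w_\xi>0$ and $|w_{\xi\xi}/w_\xi|\le C(a)/\ep$ throughout $[0,a\ep|\ln\ep|]$; since $g\in C^2(\overline{\Omega_0})$, choosing $K=K(a,\|g\|_{C^2})$ large enough (but independent of $\ep$) makes the second bracket positive, hence $\mathcal L v_2\le 0$.

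\textbf{Exponential-bump slot.} For $v_1(x):=\tilde m e^{-\lambda|d(0,x)|/\ep}$, direct differentiation at points with $d(0,x)\ne 0$ gives
\[
-\ep\Delta v_1-\tfrac{1}{\ep}v_1(1-v_1)=-\tfrac{v_1}{\ep}\bigl(\lambda^2|\nabla d|^2+(1-v_1)\bigr)+\lambda\,\mathrm{sign}(d(0,x))\,v_1\,\Delta d.
\]
Since $\tilde m\le 1/2$ we have $1-v_1\ge 1/2$, and \eqref{est-dist} bounds $|\Delta d|$, so the negative $1/\ep$-scale term dominates and $v_1$ is a classical sub-solution wherever $d(0,\cdot)$ is smooth.

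\textbf{Conclusion and main difficulty.} Applying max-of-sub-solutions on the transverse intersection $\{v_1=v_2\}$ upgrades the above to a sub-solution property of $\underline u$, and parabolic comparison then delivers \eqref{sub-sol2}. Two points require real care. First, the coupling between the $\mathcal O(1/\ep)$ bound on $w_{\xi\xi}/w_\xi$ and the $\ep$-independent choice of $K$ is exactly what pins the time horizon to $t\le a\ep|\ln\ep|$. Second, and genuinely new relative to the fast-decay setting of \cite{A-Duc}, $v_1$ has a concave kink along the initial front $\Gamma_0$, so the sub-solution inequality fails there in the distributional sense. I would handle this by regularising $|d(0,x)|$ as $\sqrt{d(0,x)^2+\delta^2}$ for a suitable $\delta=\delta(\ep)$, verifying the classical sub-solution property of the smoothed bump (the extra negative Laplacian contribution on the ridge being absorbed by the $\ep^{-1}v_1(1-v_1)$ reaction since $\tilde m<1$), applying comparison, and finally sending $\delta\to 0$; this kink management is the main obstacle of the proof.
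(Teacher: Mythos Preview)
Your proof is correct and follows the same route as the paper: check the initial ordering, then verify $\mathcal L^\ep[\underline u]\le 0$ separately on each slot of the $\max$, using the ODE estimates of Lemma~\ref{lem:w} for the $w$-slot and a direct computation for the exponential bump. Your additional concern about the concave kink of $v_1$ along $\Gamma_0$ is a legitimate technicality that the paper does not explicitly address (it simply computes on each half $\{d(0,\cdot)\gtrless 0\}$); your regularization $|d|\rightsquigarrow\sqrt{d^2+\delta^2}$ does repair it, and in fact taking $\delta$ of order $\ep$ already yields a smooth classical sub-solution, so no limit $\delta\to 0$ is needed.
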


\begin{proof}
Let us first notice that, for all $x\in\R ^N$,
\begin{equation*}
\underline{u}(0,x)=\max\left(\tilde m e^{-
\lambda\frac{|d(0,x)|}{\ep}},g(x)\right)\leq m e^{-
\lambda\frac{|d(0,x)|}{\ep}}+g(x)= \ue (0,x)\,.
\end{equation*}
Then it remains to show that $\underline{u}$ is a sub-solution of
Problem $\Pe$.

Let us consider the operator
\begin{equation*}
\mathcal L^\ep [v]:=\partial _t v-\ep\Delta
v-\frac{1}{\ep}v(1-v)\,.
\end{equation*}
Let $a>0$ be arbitrary. We show below that, if $K>0$ is
sufficiently large then, for all $\ep >0$ small enough, $\mathcal
L^\ep [\underline{u}]\leq 0$. We distinguish two cases.

We first consider the points $(t,x)$ where $\underline u
(t,x)=\tilde m e^{- \lambda \frac{|d(0,x)|}{\ep}}$. Assume further
that $d(0,x)\geq 0$ so that $\underline u (t,x)=\tilde m e^{-
\lambda \frac{d(0,x)}{\ep}}=:\varphi _\ep(x)$. We compute
$$
\begin{array}{ll}
\partial_t \underline{u}=0 \vsp\\
\Delta \underline{u}= \lambda ^2 \di{\frac {|\n d|^2}{\ep ^2}}
\varphi _\ep-\lambda \di{\frac{\Delta d}{\ep}} \varphi _\ep\,.
\end{array}
$$
Therefore, we get
$$
\ep \mathcal L ^\ep[\underline u](t,x)= \varphi _\ep\left(-\lambda
^2|\n d|^2+\ep \lambda  \Delta d -1+\varphi _\ep\right)\,.
$$
Since $0\leq \varphi _\ep \leq 1/2$, we get
$$
\ep \mathcal L ^\ep[\underline u](t,x)\leq \varphi _\ep\left(\ep
\lambda \Delta d -\frac 12\right)\leq 0\,,
$$
for $\ep >0$ sufficiently small. The case where $d(0,x)\leq 0$ is
very similar and omitted.

Next we consider the points $(t,x)$ where $\underline u
(t,x)=w\left(\frac{t}{\ep}\,, g(x)-Kt\right)$. We have
$$
\begin{array}{ll}
\partial_t \underline{u}=\eun w_s-K w_\xi\vsp\\
\Delta \underline{u}= w_{\xi\xi} |\nabla g|^2+w_\xi \Delta g\,.
\end{array}
$$
Then, we get
\begin{equation*}
\begin{array}{lll}
\mathcal L^\ep [\underline{u}](t,x)&=\di \frac{1}{\ep} w_s-K
w_\xi-\ep\left( w_{\xi\xi} |\nabla g|^2+w_\xi
\Delta g\right)-\frac{1}{\ep} w(1-w)\vsp\\
&\leq\di \frac 1 \ep w_s -K
w_\xi-\ep\left( w_{\xi\xi} |\nabla g|^2+w_\xi \Delta g\right)-\frac 1 \ep \bar f _\ep (w)\vsp\\
&=-w_\xi\left [K+\ep\left( \frac{w_{\xi\xi}}{w_\xi} |\nabla g|^2+
\Delta g\right)\right]\,,
\end{array}
\end{equation*}
where we have successively used \eqref{fep} and \eqref{ode}. In
view of Lemma \ref{lem:w} $(iv)$, there exists a constant $C(a)>0$
such that, for all $(t,x)$ with $0\leq t \leq a\ep|\ln\ep|$, we
have
\begin{equation*}
\left\lvert\frac{w_{\xi\xi}}{w_\xi} |\nabla g|^2+ \Delta
g\right\rvert \leq \frac{C(a)}{\ep}\,.
\end{equation*}
Therefore, choosing $K>C(a)$ implies
\begin{equation*}
\mathcal L^\ep [\underline{u}](t,x)\leq -w_\xi
\left(K-C(a)\right)\leq 0\,,
\end{equation*}
since $w_\xi >0$.

This completes the proof of Lemma \ref{lem:sub-small}.
\end{proof}

As a consequence of the above construction of sub-solutions for
small times, we deduce that, after a very short time, the solution
$u^\ep$ approaches 1 in most part of the support of the initial
data.

\begin{cor}[Generation of interface \lq\lq from the inside"]\label{cor:gen-inside}
Let Assumption \ref{H1} be satisfied. Then there exist $k>0$,
$\alpha>0$ such that, for all $\ep>0$ small enough,
\begin{equation}\label{def:k}
d(0,x)\leq -k \ep|\ln \ep|\, \Longrightarrow\, 1-\ep\leq
\ue(t^\ep,x)\leq 1+\ep\,,
\end{equation}
wherein $t^\ep:=\alpha\ep |\ln\ep|$.
\end{cor}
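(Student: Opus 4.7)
The plan is to sandwich $\ue(t^\ep,x)$ between explicit super- and sub-solutions at time $t^\ep=\alpha\ep|\ln\ep|$, for those $x$ with $d(0,x)\le -k\ep|\ln\ep|$. The upper bound $\ue(t^\ep,x)\le 1+\ep$ is essentially free: since $u_{0,\ep}=g+h_\ep$ is bounded by $Z:=\|g\|_\infty+M$, the spatially constant function $\bar z(t)$ solving the ODE $\bar z'=\eun\bar z(1-\bar z)$ with $\bar z(0)=Z$ is a super-solution of $\Pe$, and by the parabolic comparison principle $\ue(t,x)\le\bar z(t)$. An explicit integration yields $\bar z(t^\ep)-1\le 2(Z-1)\ep^{\alpha}/Z$ for $\ep$ small, so choosing $\alpha$ large enough delivers the upper bound.

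For the lower bound I apply Lemma~\ref{lem:sub-small} with $a=\alpha$ to produce a constant $K>0$ and the sub-solution $\underline u$ of \eqref{sub-sol} satisfying $\underline u\le\ue$ on $[0,t^\ep]\times\R^N$. To force $\underline u(t^\ep,x)\ge 1-\ep$ I look at the ODE branch $w(t^\ep/\ep,\,g(x)-Kt^\ep)$ of the maximum. By Lemma~\ref{lem:w}(v), used with $s=\alpha|\ln\ep|$, it is enough to guarantee
\[
g(x)-Kt^\ep\ \ge\ 3\ep|\ln\ep|, \qquad \text{i.e.,}\qquad g(x)\ \ge\ (3+K\alpha)\,\ep|\ln\ep|.
\]
This is the step where Assumption~\ref{H3} enters. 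The $C^2$ regularity of $\widetilde g$ on $\overline{\Omega_0}$, combined with $\widetilde g|_{\Gamma_0}=0$ and \eqref{pente}, permits a Taylor expansion of $\widetilde g$ in the inward normal direction and produces a linear lower bound $g(x)\ge(\delta/2)|d(0,x)|$ on some fixed tubular strip $\{-d_1\le d(0,x)\le 0\}$ inside $\Omega_0$. For $x$ in $\{-d_1\le d(0,x)\le -k\ep|\ln\ep|\}$, choosing $k\ge 2(3+K\alpha)/\delta$ gives the required inequality; for $x$ with $d(0,x)\le -d_1$, $g(x)$ is bounded below by a positive constant independent of $\ep$, so the inequality is trivial for $\ep$ small.

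The main delicate point is the order in which the constants are fixed: $\alpha$ must be chosen first, large enough both to activate Lemma~\ref{lem:w}(v) and to ensure the ODE super-solution decays below $1+\ep$; this $\alpha$ then determines $K=K(\alpha)$ via Lemma~\ref{lem:sub-small}; only afterwards can $k$ be calibrated in terms of $\alpha$, $K$, and $\delta$. A secondary bookkeeping issue is stitching the boundary-layer estimate (valid on the fixed tube $\{|d(0,x)|\le d_1\}$) with the bulk estimate to cover all of $\{d(0,x)\le -k\ep|\ln\ep|\}$, but since $k\ep|\ln\ep|\to 0$, for $\ep$ small the two regions overlap and the splitting causes no real difficulty.
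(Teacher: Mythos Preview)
Your proposal is correct and follows essentially the same route as the paper: for the lower bound both arguments apply Lemma~\ref{lem:sub-small} with $a=\alpha$, use Assumption~\ref{H3} (mean value theorem in the paper, Taylor expansion in your version) to obtain the implication \eqref{pente-bis}, and then conclude via Lemma~\ref{lem:w}\,(v); for the upper bound the paper likewise uses a spatially constant ODE super-solution (deferred to \eqref{recentrage} in Section~\ref{s:proof}, with the modified flow $w(\cdot,\|g\|_\infty+M)$ in place of your explicit logistic solution $\bar z$). Your discussion of the order in which $\alpha$, $K$, $k$ are fixed and of the stitching between the boundary strip and the bulk is slightly more detailed than the paper's, but the substance is identical.
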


\begin{proof}In Lemma \ref{lem:sub-small}, we select
$a=\alpha$ where $\alpha >0$ is as in Lemma \ref{lem:w} $(v)$.
Note that, in view of \eqref{pente}, the mean value theorem
provides the existence of a constant $k>0$ such that
\begin{equation}\label{pente-bis}
\text{ if } \quad d(0,x)\leq -k \ep|\ln\ep| \quad \text{ then }
\quad g(x)\geq (3+K\alpha)\ep|\ln\ep|\,.
\end{equation}
Then, using \eqref{sub-sol2}, we see that, for all $x$ satisfying
$d(0,x)\leq -k\ep|\ln\ep|$,
$$
\ue(t^\ep,x)\geq \underline u (t^\ep,x)\geq w(\alpha|\ln\ep|,3
\ep|\ln\ep|)\geq 1-\ep\,.
$$
The last inequality follows from Lemma \ref{lem:w} $(v)$.

Note that the upper bound $1+\ep$ is actually valid for all $x$
and all $t\geq t^\ep$, as seen in \eqref{recentrage}, and will be
proved in Section \ref{s:proof}.

The corollary is proved.
\end{proof}

\subsection{Motion of interface}\label{ss:motion}

In the fast exponential decay case, the limit speed of the sharp
interface limit is $c^*$. Therefore sub-solutions for the motion
of interface are constructed in \cite{A-Duc} by using the
travelling waves associated with the speeds $c^*-o(\ep)<c^*$.
Since they are changing sign, a slight modification make the
sub-solutions compactly supported from one side. In the slow
exponential decay case we consider, the limit interface moves with
speed $\cl>c^*$. Since the travelling waves with speeds $\cl-o
(\ep)>c^*$ are not changing sign  we are not able to construct
compactly supported sub-solutions.

\vskip 8 pt In the sequel, we denote by $U$ the travelling wave
associated with the speed of the limit interface $\cl=\lambda +
\lambda ^{-1}$ and by $d(t,x)$ the cut-off signed distance
function associated with $\Gamma ^{\cl}$ the solution of the free
boundary problem $(P^{\cl})$. We also consider $V$ the travelling
wave associated with the speed $c_\ep:=\cl-\ep|\ln \ep|$.

 From Lemma \ref{lem:beh-U}, we see that for $U$ (whose speed is
$\cl=\lambda+\lambda^{-1}$),  the $\mu$ that appears in
\eqref{plus-infini} is actually equal to $\lambda$. Therefore, we
collect
\begin{equation}\label{moins-infini-U}
r e^{-\eta |z|} \leq 1-U(z)\leq R e^{-\eta |z|} \quad \text{ for }
z\leq 0\,,
\end{equation}
\begin{equation}\label{plus-infini-U}
r e^{-\lambda  z}\leq U(z)\leq R e^{-\lambda z} \quad \text{ for }
z\geq 0\,.
\end{equation}
Moreover, since $V$ which is slightly slower it will decay faster.
More precisely, we deduce from Lemma \ref{lem:beh-U} that
\begin{equation}\label{moins-infini-V}
r e^{-\eta _\ep  |z|} \leq 1-V(z)\leq R e^{-\eta _\ep |z|} \quad
\text{ for } z\leq 0\,,
\end{equation}
\begin{equation}\label{plus-infini-V}
r e^{-\mu _\ep  z}\leq V(z)\leq R e^{-\mu _\ep z} \quad \text{ for
} z\geq 0\,,
\end{equation}
with $\eta _ \ep \geq \eta + \gamma \ep |\ln \ep|$ and $\mu _ \ep
\geq \lambda + \gamma  \ep |\ln \ep|$, for some $\gamma >0$. The
estimates on the derivatives of $U$ and $V$ corresponding to
\eqref{moins-infini-der} and \eqref{plus-infini-der} also hold.

\vskip 8 pt We are looking for sub-solutions in the form
\begin{equation}\label{sub-sol-motio}
u^-(t,x):=U\left(\frac{d(t,x)+\ep|\ln \ep|m_1e^{m_2
t}}\ep\right)-\ep V\left(\frac{d(t,x)+\ep|\ln \ep|m_1e^{m_2
t}}\ep\right)\,.
\end{equation}
In the sequel we set
\begin{equation}\label{def:z}
z(t,x):=\di \frac{d(t,x)+\ep|\ln \ep|m_1e^{m_2 t}}\ep\,.
\end{equation}

\begin{lem}[Ordering initial data]\label{lem:ordering}
Let Assumptions \ref{H1} and \ref{H3} be satisfied. Then there
exists $\tilde m _1 >0$ such that for all $m_1\geq \tilde m _1$,
all $m_2>0$, all $\ep>0$ small enough, we have
\begin{equation}\label{bla}
u^-(0,x)\leq\ue(t^\ep,x)\,,
\end{equation}
for all $x\in \R ^N$, with $t^\ep=\alpha \ep|\ln\ep|$.
\end{lem}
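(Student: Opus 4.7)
The plan is to split $\R^N$ according to the sign and magnitude of $d(0,x)$ and, in each region, to bound $u^-(0,x)$ from above by a known lower bound on $\ue(t^\ep,x)$. Write $z_0:=(d(0,x)+\ep|\ln\ep|m_1)/\ep$, so that $u^-(0,x)=U(z_0)-\ep V(z_0)\leq U(z_0)$; note that $e^{m_2\cdot 0}=1$, so the parameter $m_2$ plays no role at $t=0$. The two ingredients are Corollary \ref{cor:gen-inside} on the deep interior and the sub-solution of Lemma \ref{lem:sub-small} everywhere else.

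\textbf{Deep interior} $\{d(0,x)\leq -k\ep|\ln\ep|\}$: Corollary \ref{cor:gen-inside} yields $\ue(t^\ep,x)\geq 1-\ep$, so it suffices to verify $u^-(0,x)\leq 1-\ep$. If $z_0\geq 0$, I use $u^-(0,x)\leq U(z_0)\leq U(0)<1$, which is at most $1-\ep$ for $\ep$ small. If $z_0<0$, the inequality becomes $1-U(z_0)\geq \ep(1-V(z_0))$, and the tail estimates \eqref{moins-infini-U}, \eqref{moins-infini-V}, combined with $\eta_\ep\geq\eta$, deliver
$$
\ep\bigl(1-V(z_0)\bigr)\leq \ep R\, e^{-\eta_\ep|z_0|}\leq \ep R\, e^{-\eta|z_0|}\leq \frac{\ep R}{r}\bigl(1-U(z_0)\bigr),
$$
which is dominated by $1-U(z_0)$ as soon as $\ep\leq r/R$.

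\textbf{Complement} $\{d(0,x)>-k\ep|\ln\ep|\}$: Lemma \ref{lem:sub-small} evaluated at $t=t^\ep$ supplies the (time-independent) lower bound $\ue(t^\ep,x)\geq \tilde m\, e^{-\lambda|d(0,x)|/\ep}$. For $m_1\geq k$ we have $z_0\geq (m_1-k)|\ln\ep|\geq 0$, whence \eqref{plus-infini-U} gives
$$
u^-(0,x)\leq U(z_0)\leq R\, e^{-\lambda z_0}=R\,\ep^{\lambda m_1}\,e^{-\lambda d(0,x)/\ep}.
$$
A case distinction on the sign of $d(0,x)$ then reduces the desired inequality to $R\,\ep^{\lambda m_1}\leq \tilde m$ when $d(0,x)\geq 0$, and to $R\,\ep^{\lambda(m_1-2k)}\leq \tilde m$ when $-k\ep|\ln\ep|<d(0,x)<0$ (using $|d(0,x)|/\ep\leq k|\ln\ep|$ there). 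Choosing $\tilde m_1:=2k+1$ and $\ep$ small then settles both sub-cases simultaneously.

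The principal delicacy is the deep-interior step: in contrast with the fast-decay setting of \cite{A-Duc}, the correction $-\ep V$ is not harmless \emph{a priori} and must genuinely be verified to be lower-order via the sharp comparison $\ep(1-V(z_0))\leq 1-U(z_0)$. That this works rests precisely on the fact that $V$ is slightly slower than $U$, so $\eta_\ep>\eta$, as recorded after \eqref{moins-infini-V}. The rest is a bookkeeping exercise, the gain coming from the match between the exponent $\lambda m_1$ produced by the logarithmic shift in $u^-$ and the matching exponential tail $e^{-\lambda|d(0,x)|/\ep}$ of $h_\ep$.
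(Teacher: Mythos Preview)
Your proof is correct and follows essentially the same route as the paper: the same splitting according to $d(0,x)$, the same use of Corollary~\ref{cor:gen-inside} in the deep interior to reduce to $u^-(0,x)\le 1-\ep$, and the same use of the $\tilde m\,e^{-\lambda|d(0,x)|/\ep}$ branch of $\underline u$ on the complement, with the constant $\tilde m_1$ chosen as a multiple of $k$. The only cosmetic difference is in the deep-interior step when $z_0<0$: the paper proves the monotonicity claim $U'-\ep V'<0$ (equation~\eqref{claim}) and evaluates $U-\ep V$ at a fixed far-left point, whereas you compare the tails of $1-U$ and $1-V$ directly via $\eta_\ep\ge\eta$; both arguments are short, and the paper's choice is driven by the fact that \eqref{claim} is reused in Lemma~\ref{lem:sub-later}.
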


\begin{proof}Choose $k>0$ so that \eqref{def:k} holds and $m_1\geq
\tilde m_1:=3k$. Note that to prove Lemma \ref{lem:ordering}, it
is sufficient to check that
\begin{equation}\label{goal}
u^-(0,x)=U\left(z(0,x)\right)-\ep V\left(z(0,x)\right)\leq
\underline{u}(t^\ep,x)\,,
\end{equation}
where $\underline u$ is the sub-solution for small times defined
in (\ref{sub-sol}).
To prove this inequality we shall split our arguments into three parts according to the value of $d(0,x)$.\\

If $x$ is such that $d(0,x)\geq 0$. Since $z(0,x)\geq 0$, we
deduce from \eqref{plus-infini-U} that, for $\ep> $ small enough,
$$ u^-(0,x)\leq U(z(0,x))\leq R e^{- \lambda
\frac{d(0,x)}{\ep}}e^{-\lambda m_ 1 |\ln \ep|}\leq \tilde m e^{-
\lambda \frac{|d(0,x)|}{\ep}}\leq \underline u (t^\ep,x)\,.
$$

If $x$ is such that $-k \ep|\ln \ep|\leq d(0,x)\leq 0$. The choice
of $m_1$ implies that  $z(0,x)\geq 2k|\ln \ep|$ so that, for $\ep>
$ small enough,
$$ u^-(0,x)\leq R e^{-\lambda k |\ln\ep|}e^{-\lambda k
|\ln\ep|}\leq \tilde m e^{-\lambda k |\ln\ep|} \leq \tilde m e^{-
\lambda \frac{|d(0,x)|}{\ep}} \leq \underline u (t^\ep,x)\,.
$$

If $x$ is such that $-2d_0 \leq d(0,x)\leq -k\ep|\ln \ep|$, which
implies that $z(0,x)\geq - d_0/\ep$. We claim that (see proof
below), for $\ep>0$ small enough,
\begin{equation}\label{claim}
U'(z)-\ep V'(z)<0\quad\text{ for all } z\in \R\,.
\end{equation}
Therefore, by using \eqref{moins-infini-U} and
\eqref{moins-infini-V} we get
$$
\begin{array}{ll}
u^-(0,x)&\leq (U-\ep V)\left(-d_0/\ep\right)\vsp\\
&\leq 1-re^{-\eta d_0/\ep}-\ep(1-Re^{-\eta_\ep d_0/\ep})\vsp\\
&=1-\ep-e^{-\eta d_0/\ep}(r-\ep R e^{-(\eta_ \ep -
\eta)d_0/\ep})\vsp\\
&\leq 1-\ep-e^{-\eta d_0/\ep}(r-\ep R)\vsp\\
&\leq 1- \ep\,,
\end{array}
$$
for $\ep >0$ small enough. In view of Corollary
\ref{cor:gen-inside}, this completes the proof of \eqref{bla}.

It remains to prove \eqref{claim}. Note that $(U-\ep
V)(-\infty)=1-\ep$ and $(U-\ep V)(\infty)=0$ and assume, by
contradiction, that there exist $\ep_0>0$ and a family
$\{z_\ep\}_{0<\ep <\ep_0}$ such that
$$
U'(z_\ep)-\ep V'(z_\ep)=0 \quad \text{for each } \ep\in
(0,\ep_0)\,.
$$
First assume that $\{z_\ep\}$ is bounded. Then there exists a
sequence $\{\ep_n\}_{n\geq 0}$ tending to zero such that
$z_{\ep_n}\to z_0\in\R$ when $n\to \infty$ for some $z_0$. Recall
that $V$ depends on $\ep>0$ and is uniformly bounded with respect
to $\ep$ up to its second derivative. Passing to the limit $n\to
\infty$  leads us to  $U'(z_0)=0$, a contradiction. Next, assume
that $\{z_\ep\}$ is unbounded. Then there exists a sequence
$\{\ep_n\}_{n\geq 0}$ tending to zero such that $z_{\ep_n}\to
\infty$ or $z_{\ep_n}\to -\infty$ when $n\to\infty$. Consider the
case where $z_ {\ep_n} \to \infty$ then we obtain that
$$
\ep_n=\frac{U'(z_{\ep_n})}{V'(z_{\ep_n})}\geq \frac{re^{-\lambda
z_{\ep_n}}}{R e^{-\mu _ \ep z_{\ep_n}}}\geq \frac r R e^{(\mu _
{\ep_n} - \lambda)z_{\ep_n}} \geq \frac r R\,,
$$
a contradiction with the behavior of $\{\ep_n\}$ when
$n\to\infty$. The case where $z_ {\ep_n} \to -\infty$ is similar.
The claim \eqref{claim} is proved.
\end{proof}

\begin{lem}[Sub-solutions for later times]\label{lem:sub-later}
Recall that $u^-$ was defined in (\ref{sub-sol-motio}) and assume
that $U(0)>\frac{1}{2}$. Then there exists $\tilde m _2
>0$ such that for all $m_1\geq \tilde m _1$, all $m_2\geq \tilde m
_2$, all $\ep>0$ small enough, we have
\begin{equation}
\ep \mathcal L ^\ep [u^- ]=\ep \partial _t u^- -\ep ^2 \Delta u^-
- u^-(1-u^-)\leq 0 \quad \text{ in } (0,\infty)\times \R^N\,.
\end{equation}
\end{lem}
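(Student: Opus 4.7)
The plan is to substitute the ansatz $u^-(t,x) = U(z) - \ep V(z)$, with $z := z(t,x)$ given by \eqref{def:z}, into $\ep\mathcal L^\ep$ and verify the resulting inequality by a regional analysis on $\R^N$. Applying the chain rule, then replacing $U''$ and $V''$ via the traveling wave equations $U''=-\cl U'-U(1-U)$ and $V''=-c_\ep V'-V(1-V)$ with $c_\ep=\cl-\ep|\ln\ep|$, and finally using the algebraic identity $u^-(1-u^-)=U(1-U)-\ep V(1-2U)-\ep^2 V^2$, one arrives at a decomposition of the form
\[
\ep\mathcal L^\ep[u^-]=(U'-\ep V')\,\mathcal B + \ep^2|\ln\ep|\,V'\,|\nabla d|^2 + \mathcal R,
\]
where $\mathcal B := \partial_t d+\cl|\nabla d|^2+\ep|\ln\ep|m_1 m_2 e^{m_2 t}-\ep\Delta d$ is the \emph{propagator bracket} and $\mathcal R := [U(1-U)-\ep V(1-V)]|\nabla d|^2-u^-(1-u^-)$ is the \emph{reaction mismatch}. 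On $\{|\nabla d|=1\}$, $\mathcal R$ simplifies to $\ep V[(1+\ep)V-2U]$.

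Next I verify $\ep\mathcal L^\ep[u^-]\leq 0$ region by region. On the inner band $\{|d|<d_0\}$, \eqref{norme-un} and \eqref{interface} force $|\nabla d|=1$ and $\partial_t d+\cl=0$, so $\mathcal B$ reduces to $\ep|\ln\ep|m_1 m_2 e^{m_2 t}-\ep\Delta d$, which is strictly positive once $m_2$ is large (by \eqref{est-dist}); combined with \eqref{claim}, this makes $(U'-\ep V')\mathcal B$ strictly negative. The second summand of the decomposition is non-positive since $V$ is decreasing. For the mismatch $\mathcal R=\ep V[(1+\ep)V-2U]$ I distinguish: for $z\gg 0$, \eqref{plus-infini-V} together with $\mu_\ep>\lambda$ forces $V\ll U$ and $\mathcal R<0$; for $z\ll 0$, both $U,V\to 1$ and $\mathcal R\to -\ep(1-\ep)<0$; in any bounded window $z\in[-A,A]$, a uniform lower bound $|U'-\ep V'|\geq c_0>0$ holds (by compactness and \eqref{claim}), so the dominant product $(U'-\ep V')\mathcal B$ is of order $-\ep|\ln\ep|$ and absorbs the $\mathcal O(\ep)$ upper bound on $|\mathcal R|$ as soon as $m_2\geq\tilde m_2$. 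On the outer region $\{|d|\geq 3d_0\}$, the cut-off gives $\nabla d=0$, $\Delta d=0$, $\partial_t d=0$, and the decomposition collapses to $(U'-\ep V')\ep|\ln\ep|m_1 m_2 e^{m_2 t}-u^-(1-u^-)$; both summands are non-positive (the first by \eqref{claim}, the second because $u^-\in[0,1]$, which follows from \eqref{moins-infini-U}--\eqref{plus-infini-V} for $\ep$ small). The transition strip $\{d_0\leq|d|\leq 3d_0\}$ is handled by the same decomposition: there $|z|\gtrsim d_0/\ep$, so the exponential estimates \eqref{moins-infini-U}--\eqref{plus-infini-der} render $U'$, $V'$, $U(1-U)$, $V(1-V)$ and $u^-(1-u^-)$ all exponentially small in $1/\ep$, while \eqref{MVT} bounds $|\partial_t d+\cl|\leq A|d|$; every term in the decomposition is then exponentially small, and the strictly negative leading contribution $(U'-\ep V')\cdot\ep|\ln\ep|m_1 m_2 e^{m_2 t}$ dominates.

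The main obstacle is the intermediate zone $z\in[-A,A]$ of the inner band, where $\mathcal R$ can take positive values and neither tail asymptotics nor a monotonicity argument apply directly. Unlike the fast-decay case of \cite{A-Duc}, $U$ does not change sign and the compactly supported sub-solution trick is unavailable; the remedy is the built-in shift $\ep|\ln\ep|m_1 e^{m_2 t}$, whose time derivative contributes the decisive factor $\ep|\ln\ep|m_1 m_2 e^{m_2 t}$ in $\mathcal B$. The hypothesis $U(0)>\tfrac12$ is used to anchor the transition zone so that $|U'-\ep V'|$ admits a lower bound uniform in small $\ep$. Fixing $m_1\geq\tilde m_1$ (as required in Lemma \ref{lem:ordering}) and then choosing $m_2\geq\tilde m_2$ large enough closes the estimate.
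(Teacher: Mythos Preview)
Your decomposition is correct and is essentially a repackaging of the paper's $E_1+\cdots+E_4$; the overall strategy is the same. However, there is a genuine gap in the outer transition strip $\{d_0\leq d(t,x)\leq 2d_0\}$. You claim that every term there is exponentially small and that the summand $(U'-\ep V')\cdot\ep|\ln\ep|m_1m_2e^{m_2t}$ is the ``leading contribution'' that dominates. But this summand is of order $\ep|\ln\ep|\,e^{-\lambda z}$, whereas the $O(1)$ part of your bracket $\mathcal B$ is $\partial_t d+\cl|\nabla d|^2=-\cl\zeta'(\tilde d)+\cl(\zeta'(\tilde d))^2=-\cl\zeta'(1-\zeta')$, which is \emph{negative}. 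Multiplying by $U'-\ep V'<0$ produces a \emph{positive} term of order $e^{-\lambda z}$, i.e.\ larger than the term you claim dominates by a factor $1/(\ep|\ln\ep|)$. This bad-sign contribution is not absorbed by the drift; it has to be cancelled against the negative part of $\mathcal R$. That cancellation is delicate: it uses the precise asymptotics $U'(z)\sim-\lambda U(z)$ as $z\to\infty$, the identity $\cl\lambda=1+\lambda^2$, and crucially $\lambda<1$ (the paper chooses $\alpha\in(0,\tfrac{1-\lambda^2}{2})$ to make $\frac{1-\alpha}{\lambda}(1+\zeta')-\cl\zeta'\geq 0$ for all $\zeta'\in[0,1]$). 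Without this explicit sign computation the inequality does not close in this region.

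Two smaller points. First, in the inner band the claim ``$\mu_\ep>\lambda$ forces $V\ll U$ for $z\gg 0$'' is not correct on any fixed window, since $\mu_\ep-\lambda=O(\ep|\ln\ep|)\to 0$; what actually works is bounding $|\mathcal R|\leq C\ep e^{-\lambda z}$ and using $(U'-\ep V')\mathcal B\leq -\tfrac{r m_1 m_2}{4}\ep|\ln\ep|\,e^{-\lambda z}$, so the $|\ln\ep|$ factor wins. Second, the hypothesis $U(0)>\tfrac12$ is not used to bound $|U'-\ep V'|$ from below; it is used so that $2U(z)-(1+\ep)V(z)\geq 2U(0)-1-\ep>0$ for $z\leq 0$, which makes $\mathcal R\leq 0$ there directly (this is the paper's Case~2).
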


\begin{proof}  By using straightforward computations we get
$$
\begin{array}{lll}
\ep \partial _t u^-=(U'-\ep V')(z)\left(\partial _t
d+m_2\ep|\ln\ep|m_1e^{m_2 t}\right)\vsp\\
\ep ^2\Delta u^- = (U''-\ep V'')(z)|\n d|^2+\ep (U'-\ep V')(z)\Delta d \vsp\\
u^-(1-u^-)=U(1-U)+\ep U V-\ep V(1-U)-\ep ^2 V^2\,,
\end{array}
$$
where $z=z(t,x)$ was defined in \eqref{def:z}. Now, the ordinary
differential equations $U''+\cl U'+U(1-U)=0$ and $V''+(\cl -\ep
|\ln \ep|)V'+V(1-V)=0$ yield
$$
\ep \mathcal L^\ep [u^- ]=E_1+\cdots+E_4\,,
$$
with
\vsp \\
$\qquad\quad  E_1:=(U'-\ep V')(z)\left(\partial _t d+\cl+m_2\pt-\ep\Delta d\right)$\vsp \\
$\qquad\quad  E_2:=\ep ^2|\ln \ep| V'(z)$\vsp \\
$\qquad\quad  E_3:=(U''-\ep V'')(z)(1-|\nabla d|^2)$\vsp \\
$\qquad\quad  E_4:=-\ep V(2U-V)(z)+\ep ^2V^2(z)\,.$\vsp\\
Note that $E_2\leq 0$. We show below that the choice
$$\tilde m
_2:=2A\left(\frac 2{\tilde m _1\eta}+1\right)\,,
$$
is enough to prove the lemma, with $A>0$ the constant that appears
in \eqref{MVT}. To that purpose we distinguish four cases, namely
\eqref{case1}, \eqref{case2}, \eqref{case3} and \eqref{case4}. In
the sequel we denote by $C$ positive constants which do not depend
on $\ep$ (and may change from places to places).

\vsp Assume that
\begin{equation}\label{case1}
-2d_0\leq d(t,x)\leq -\frac{m_2}{2A}\pt\,.
\end{equation}
This implies
\begin{equation}\label{z-infini1}
z\leq -|\ln\ep|m_1\left(\frac {m_2}{2A}-1\right)\leq \frac 2
\eta\ln \ep\,.
\end{equation}
Using the estimates for $U'$ and $V'$ we see that, for $\ep
>0$ small enough,
$$
 |E_1|\leq C(e^{-\eta|z|}+\ep e^{-\eta _\ep |z|})\leq C
 e^{-\eta |z|}\leq C \ep ^2\,,
$$
thanks to \eqref{z-infini1}. Using similar arguments we see that
$|E_3|\leq C\ep^2$. At least note that $z \to -\infty$ as $\ep \to
0$ and that $V(2U-V)(-\infty)=1$. Therefore, if $\ep>0$ is small
enough then $E_4 \leq -\frac 1 2 \ep +C\ep ^2$. It follows that
$\ep \mathcal L^\ep [u^-]\leq - \frac 1 2\ep+C \ep ^2 \leq 0$.

\vsp Assume that
\begin{equation}\label{case2}
 -\frac{m_2}{2A}\pt \leq d(t,x)\leq -\pt\,.
\end{equation}
 From \eqref{norme-un} we deduce that $E_3=0$. From
\eqref{interface}, we deduce that $\partial _t d +\cl=0$. Since,
for $\ep>0$ small enough, $m_2\pt-\ep\Delta d \geq 0$ we deduce
from \eqref{claim} that $E_1\leq 0$. Next, since \eqref{case2}
implies that $z\leq 0$ we get
$$2U(z)-V(z)-\ep V(z) \geq 2 U(0)-1-\ep \geq 0\,,$$
since $U(0)>\frac{1}{2}$. Hence we obtain that $E_4 \leq 0$, so
that we have $\ep \mathcal L^\ep[u^-]\leq 0$.

\vsp Assume that
\begin{equation}\label{case3}
-\pt \leq d(t,x)\leq d_0\,.
\end{equation}
Here, $E_3=0$ and $\partial _t d +\cl=0$ also hold true. Note
that, for $\ep>0$ small enough,
\begin{equation}\label{truc}
m_2\pt-\ep\Delta d \geq \frac{m_1m_2}2 \ep|\ln\ep|\,.
\end{equation}
Using that $U'(z)-\ep V'(z)<\frac 12 U'(z)$ for all $z\in \R$,
whose proof is similar to that of \eqref{claim}, we see that
$$
E_1\leq \frac 12 U'(z)\frac {m_1 m_2}2 \ep |\ln\ep|\leq
-\frac{rm_1m_2}4 \ep |\ln \ep|e^{-\lambda z}\,,
$$
since \eqref{case3} implies $z\geq 0$. Next, we see that $E_4 \leq
C\ep V(z)\leq C\ep e^{-\mu _\ep z}\leq C \ep e^{-\lambda z}$. As a
consequence we get
$$
\ep \mathcal L^\ep[u^-]\leq \ep e^{-\lambda
z}\left(-\frac{rm_1m_2}4|\ln \ep|+C\right)\leq 0\,,
$$
for $\ep >0$ small enough.

\vsp Assume that
\begin{equation}\label{case4}
d_0\leq d(t,x)\leq 2d_0\,.
\end{equation}
We rewrite $ \ep \mathcal L^\ep[u^-]\leq F_1+F_2+F_3$ where
\vsp \\
$\qquad\quad  F_1:=U'(z)\left(\partial _t d+\cl\right)+U''(z)(1-|\n d|^2)$\vsp \\
$\qquad\quad  F_2:=U'(z)(m_2\pt- \ep \Delta d)$\vsp \\
$\qquad\quad  F_3:=-\ep V'(z)(\partial_ t d+\cl+m_2 \pt-\ep \Delta d)-\ep V''(z)(1-|\n d|^2)-\ep V(2U-V)(z)+\ep ^2 V^2(z)\,.$\vsp \\
Since  $|F_3|\leq C\ep e^{-\mu _\ep z}\leq C \ep e^{-\lambda z}$,
we deduce from \eqref{truc} that, for $\ep >0$ small enough,
$$
F_2+F_3\leq \ep e^{-\lambda z}\left(-\frac{rm_1m_2}2|\ln
\ep|+C\right)\leq 0\,.
$$
It remains to estimate the term $F_1$. Since \eqref{case4} implies
that $z\geq d_0/\ep$ we have, for $\ep
>0$ small enough, $U(z)\leq \alpha$ with $0<\alpha<1$ to be selected below. Therefore
it holds that $$U''(z)\leq -\cl U'(z)-(1-\alpha)U(z).$$ Recall
that (see subsection \ref{ss:distance})
$$
\partial _t d=(\partial _
t \tilde d) \zeta'=-\cl \zeta'\,\quad \text{ and }\quad|\n
d|^2=|\n \widetilde d|^2 (\zeta ')^2=(\zeta ')^2\,, $$ where the
function $\zeta '$ is evaluated at point $\widetilde d$. It
follows that
$$
\begin{array}{ll}
F_1&\leq U'(z)(-\cl \zeta ' +\cl)-\cl U'(z)(1-(\zeta ')^2)-(1-\alpha)U(z)(1-(\zeta ')^2)\vsp\\
&\leq -(1-\zeta ')\left[(1-\alpha)(1+\zeta ') U(z)+\cl \zeta '
U'(z)\right]\,.
\end{array}
$$
Since, as $\ep \to 0$, $U'(z)=-\beta e^{-\lambda z}(1+o(1))$ and
$U(z)=\frac \beta \lambda e^{-\lambda z}(1+o(1))$ for some $\beta
>0$, we see that
$$
(1-\alpha)(1+\zeta ') U(z)+\cl \zeta ' U'(z) =
\left[\frac{1-\alpha}\lambda (1+\zeta ')-c\zeta '\right]\beta
e^{-\lambda z}(1+o(1))\,.
$$
Recall that $0\leq \zeta ' \leq 1$ so that by selecting
$\alpha\in(0,\frac{1-\lambda ^2}2)$ we see that
$\frac{1-\alpha}\lambda (1+\zeta ')-c\zeta '\geq 0$ so that, for
$\ep >0$ small enough, $F_1\leq 0$. Hence $\ep \mathcal
L^\ep[u^-]\leq 0$.

The lemma is proved.
\end{proof}

\section{Super-solutions}\label{s:super}

This section is devoted to the construction of super-solutions
which are efficient for the study of both the generation and the
motion of interface.

\begin{lem}[Super-solutions]\label{lem:sup}
Let Assumptions \ref{H1} and \ref{H2} be satisfied. Then there
exists a constant $K_0>1$ such that, for all $\widehat{K}\geq
K_0$, the following holds. For all $x_0\in\Gamma
_0=\partial\Omega_0$,
 for all $\ep >0$ small enough, we have
\begin{equation*}
u^\ep (t,x)\leq \widehat{K} U\left(\frac{(x-x_0)\cdot n_0-\cl
t}{\ep}\right)\quad \text{ for all } (t,x) \in [0,\infty) \times
\R ^N\,,
\end{equation*}
wherein $n_0$ is the outward normal vector to $\Gamma
_0=\partial\Omega_0$ at $x_0$.
\end{lem}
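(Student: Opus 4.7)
The natural candidate is
$$\bar u(t,x) := \widehat K\, U(\xi(t,x)), \qquad \xi(t,x) := \frac{(x-x_0)\cdot n_0 - \cl\, t}{\ep},$$
and the plan is to apply the parabolic comparison principle to $(P^{\;\!\ep})$ after verifying (i) that $\bar u$ is a super-solution once $\widehat K\geq 1$, and (ii) that the initial ordering $u_{0,\ep}(x) \leq \bar u(0,x)$ holds on $\R^N$ once $\widehat K$ is large enough. The constant $K_0$ will be chosen so that both requirements are met.

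\textbf{Super-solution inequality.} Since $|n_0|=1$, straightforward differentiation gives $\partial_t \bar u = -(\cl/\ep)\widehat K\, U'(\xi)$ and $\Delta\bar u = (\widehat K/\ep^2)U''(\xi)$. Substituting into $\ep\,\mathcal{L}^\ep[\bar u] = \ep\partial_t\bar u - \ep^2\Delta\bar u - \bar u(1-\bar u)$ and using the travelling-wave equation \eqref{tw} with $c=\cl$ to replace $U''(\xi) + \cl U'(\xi)$ by $-U(\xi)(1-U(\xi))$ yields
$$\ep\,\mathcal{L}^\ep[\bar u] \;=\; \widehat K\,(\widehat K - 1)\,U(\xi)^2 \;\geq\; 0$$
whenever $\widehat K \geq 1$, so $\bar u$ is indeed a super-solution of $(P^{\;\!\ep})$.

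\textbf{Initial ordering via convexity.} Write $\phi(x) := (x - x_0)\cdot n_0$. Assumption \ref{H2}, combined with $x_0\in\Gamma_0$ and $n_0$ being the outward unit normal, makes $\{\phi=0\}$ a supporting hyperplane of $\Omega_0$, so that $\Omega_0\subset\{\phi\leq 0\}$; a Cauchy--Schwarz argument on the projection of $x$ onto $\overline{\Omega_0}$ then gives $\phi(x) \leq d(0,x)$ for every $x\notin\Omega_0$, and $U$ being decreasing translates this into $U(d(0,x)/\ep) \leq U(\phi(x)/\ep)$. I would then split $\R^N$ according to the sign of $\phi(x)$. For $\phi(x)\leq 0$ (in particular throughout $\Omega_0$), monotonicity of $U$ gives $U(\phi(x)/\ep)\geq U(0)$, so
$$u_{0,\ep}(x) \leq \|g\|_\infty + M \leq \widehat K\, U(\phi(x)/\ep)\quad\text{once}\quad\widehat K \geq (\|g\|_\infty + M)/U(0).$$
For $\phi(x)\geq 0$, necessarily $x\notin\Omega_0$, hence $g(x)=0$, and Assumption \ref{H1}(iii) combined with the inequality $\phi(x)\leq d(0,x)$ bounds $u_{0,\ep}(x)=h_\ep(x)$ by $Me^{-\lambda\phi(x)/\ep}$; the matching lower bound \eqref{plus-infini-U} then produces
$$\widehat K\,U(\phi(x)/\ep)\;\geq\;\widehat K\, r\, e^{-\lambda\phi(x)/\ep}\;\geq\; u_{0,\ep}(x)\quad\text{once}\quad\widehat K \geq M/r.$$
Choosing $K_0 := \max\{1,\ (\|g\|_\infty+M)/U(0),\ M/r\}$ secures both conditions, and the comparison principle closes the argument.

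\textbf{Main obstacle.} The delicate point, absent in the fast-decay case of \cite{A-Duc}, is that the exponential rate $\lambda$ governing $h_\ep$ is \emph{exactly} the asymptotic decay rate of $U$ at $+\infty$: since $\cl = \lambda + \lambda^{-1}$, the smallest root $\mu$ of $\mu^2 - \cl\mu + 1 = 0$ in Lemma \ref{lem:beh-U} is precisely $\lambda$, cf.\ \eqref{plus-infini-U}. There is therefore no room in the exponent of $\bar u$ and the entire discrepancy between $h_\ep$ and $U$ has to be absorbed by the multiplicative prefactor $\widehat K\geq M/r$; this tight matching is also why Assumption \ref{H2} is indispensable, since convexity is precisely what allows one to replace the nonlinear signed distance $d(0,x)$ by the linear quantity $\phi(x)=(x-x_0)\cdot n_0$ in the exponent of the comparison.
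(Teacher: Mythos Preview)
Your proof is correct and follows essentially the same route as the paper: the same super-solution computation yielding $\ep\mathcal L^\ep[\bar u]=\widehat K(\widehat K-1)U^2\geq 0$, the same two-case split on the sign of $(x-x_0)\cdot n_0$ for the initial ordering, and the identical choice $K_0=\max\{1,(\|g\|_\infty+M)/U(0),M/r\}$. Your closing remark on the tight $\lambda$-matching is a nice piece of exposition but is commentary rather than an additional argument.
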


Before proving the lemma, for a given $x\in\R^N$, choose
$x_0\in\partial \Omega_0$ as the projection of $x$ on the convex
$\Omega_0$. For such a choice we have
\begin{equation*}
(x-x_0)\cdot n_0=d(0,x)\,,
\end{equation*}
and the lemma yields, for some $\widehat K >1$,
 \begin{equation}\label{super-sol}
 u^\ep (t,x)\leq \widehat{K} U\left( \frac{d(0,x)-\cl
 t}{\ep}\right)\,,
\end{equation}
for all $t\geq 0$ and all $x\in\R^N$. Next, we prove Lemma
\ref{lem:sup}.

\begin{proof} We recall that $\lambda$ and $M$ were defined in
Assumption \ref{H1} $(iii)$ and that $U(z)\geq r e^{-\lambda z}$
for all $z\geq 0$. Then we define
\begin{equation*}
K_0:= \max\left(1, \frac{M}{r},\;
\frac{\|g\|_\infty+M}{U(0)}\right)\,.
\end{equation*}

Next, let $\widehat{K}\geq K_0$ and $x_0\in\Gamma
_0=\partial\Omega_0$ be given. We consider the map
\begin{equation*}
u ^+(t,x):=\widehat{K} U\left(\frac{(x-x_0)\cdot n_0-\cl
t}{\ep}\right)\,.
\end{equation*}
Straightforward computations and equation ${U} ''+\cl {U} '+U
(1-U)=0$ for the travelling wave yield
\begin{equation*}
\ep \mathcal L^\ep [u ^+](t,x)=\widehat{K}(\widehat{K}-1)(U)^2
\left(\frac{(x-x_0)\cdot n_0-\cl t}{\ep}\right)\,,
\end{equation*}
and therefore $\mathcal L^\ep [u ^+]\geq 0$ in $\mathbb
(0,\infty)\times \R ^N$. Hence, to complete the proof of the lemma
it remains to order the initial data, i.e.
\begin{equation}\label{but}
u_{0,\ep}(x)\leq u ^+(0,x)=\widehat KU\left(\frac{(x-x_0)\cdot
n_0}{\ep}\right)\,,\;\;\forall x\in\R^N\,.
\end{equation}

First, assume that $x$ in the half plane $\{y\in\R^N:
\;\;(y-x_0)\cdot n_0\leq 0\}$. Then since $U$ is decreasing we
have
\begin{equation*}
U\left(\frac{(x-x_0)\cdot n_0}{\ep}\right)\geq U(0)\,.
\end{equation*}
Therefore we obtain that
\begin{equation*}
u_{0,\ep}(x)\leq \frac{\|g\|_\infty+M}{U(0)}
U\left(\frac{(x-x_0)\cdot n_0}{\ep}\right)\leq \widehat KU
\left(\frac{(x-x_0)\cdot n_0}{\ep}\right)\,,
\end{equation*}
in view of  the choice of $K_0$.

Next, we assume that $x$ is in the half plane $\{y\in\R^N:
\;\;(y-x_0)\cdot n_0> 0\}$. Since $\Omega_0$ is convex, we have
$x\notin\Omega_0$ and $d(0,x)\geq (x-x_0)\cdot n_0>0$ so that, in
view of the choice of $K_0$,
$$
\begin{array}{ll}
u_{0,\ep}(x)=h_\ep(x)&\leq  M e^{-\lambda d (0,x)/\ep}\vsp\\
&\leq \widehat K r e^{-\lambda (x-x_0)\cdot n_0/\ep}\vsp\\
&\leq \widehat KU\left(\di\frac{(x-x_0)\cdot n_0}{\ep}\right)\,.
\end{array}
$$

This completes the proof of Lemma \ref{lem:sup}.
\end{proof}

As a consequence of the above construction of super-solutions, we
deduce that, after a very short time, the solution $u^\ep$
approaches 0 in most part of the complementary of the support of
the initial data. The proof is an easy consequence of the upper
bound \eqref{super-sol} combined with the exponential decay
\eqref{plus-infini-U} of $U$. Details are omitted.

\begin{cor}[Generation of interface \lq\lq from the outside"]\label{cor:gen-outside}
Let Assumptions \ref{H1} and \ref{H2} be satisfied.  For any
$p>0$, there exists $k_p>0$ such that, for all $\ep>0$ small
enough,
$$
d(0,x)\geq k_p \ep|\ln \ep|\, \Longrightarrow\, 0\leq
\ue(t^\ep,x)\leq \ep ^p\,,
$$
where we recall that $t^\ep=\alpha \ep|\ln \ep|$.
\end{cor}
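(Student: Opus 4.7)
The plan follows the remark preceding the statement: combine the global super-solution bound \eqref{super-sol} with the exponential decay \eqref{plus-infini-U} of the travelling-wave profile $U$ at $+\infty$. The lower bound $u^\ep \geq 0$ is a routine application of the parabolic comparison principle (zero is a stationary sub-solution of $\Pe$ and the initial data are nonnegative), so the work lies entirely in the upper bound.

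First I would substitute $t = t^\ep = \alpha\ep|\ln\ep|$ together with the hypothesis $d(0,x)\geq k_p\,\ep|\ln\ep|$ into the argument of $U$ appearing in \eqref{super-sol}. This yields
\[
\frac{d(0,x) - \cl\, t^\ep}{\ep} \;\geq\; (k_p - \cl\alpha)\,|\ln\ep|\,.
\]
Once $k_p > \cl\alpha$ is imposed, the right-hand side is positive, so the monotonicity of $U$ combined with \eqref{plus-infini-U} bounds $U$ at this point by $R\,\ep^{\lambda(k_p - \cl\alpha)}$, and \eqref{super-sol} therefore yields $u^\ep(t^\ep,x) \leq \widehat K R\, \ep^{\lambda(k_p - \cl\alpha)}$.

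It then suffices to choose $k_p$ large enough both to dominate $\cl\alpha$ and to make the exponent exceed $p$ with room to absorb the prefactor $\widehat K R$; the explicit choice $k_p := \cl\alpha + (p+1)/\lambda$ gives an exponent $\geq p+1$, so the bound becomes $\leq \widehat K R\, \ep \cdot \ep^p \leq \ep^p$ for all $\ep$ small enough. I do not anticipate any real obstacle here: Lemma \ref{lem:sup} has already performed the substantive work, and the present argument is merely exponent bookkeeping. The only point meriting care is taking $k_p$ strictly larger than $\cl\alpha$, so that one operates in the decaying tail of $U$ rather than in the region where $U$ remains close to $1$.
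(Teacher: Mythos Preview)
Your proposal is correct and follows precisely the route the paper indicates: the paper omits details but explicitly states the argument is the combination of \eqref{super-sol} with \eqref{plus-infini-U}, and your exponent bookkeeping fills this in cleanly. The choice $k_p=\cl\alpha+(p+1)/\lambda$ and the absorption of the prefactor $\widehat K R$ into the extra factor of $\ep$ are exactly what is needed.
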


\section{Proof of Theorem \ref{th:results}}\label{s:proof}

We first claim that
\begin{equation}\label{recentrage}
 0\leq u^\ep(t+t^\ep,x)\leq 1+\ep\,,
\end{equation}
for all $x\in\R^N$ and all $t\geq 0$. Indeed, the map
\begin{equation*}
\overline{u}(t,x):=w\left(\frac{t}{\ep},
\|g\|_\infty+M\right)\,,\;\;t\geq 0\,,
\end{equation*}
satisfies $\mathcal L^\ep [\overline u]=0$ in $\mathbb
(0,\infty)\times \R ^N$ and $u^\ep (0,x)\leq \|g\|_\infty+M=
\overline{u} (0,x)$ for all $x\in \R ^N$. Therefore the comparison
principle yields
\begin{equation*}
u^\ep (t,x)\leq w\left(\frac{t}{\ep},
\|g\|_\infty+M\right)\,,\;\;\forall t\geq 0\,,\;\forall
x\in\R^N\,.
\end{equation*}
Thus Lemma \ref{lem:w} $(v)$ applies and completes the proof of
\eqref{recentrage}.

\vskip 8 pt Now, let Assumptions \ref{H1}, \ref{H2} and \ref{H3}
be satisfied. Choose $k>0$, $\alpha >0$ as in Corollary
\ref{cor:gen-inside} and $\widehat K >0$ as in Lemma
\ref{lem:sup}. According to Lemma \ref{lem:ordering}, Lemma
\ref{lem:sub-later} and the comparison principle, there exist $m_1
>0$ and $m_2 >0$ such that, for $\ep >0$ small enough,
\begin{equation}\label{dessous}
u^- (t-t^\ep,x)\leq \ue(t,x)\,,
\end{equation}
for all $(t,x)\in[t^\ep,T]\times \R^N$. We choose $\mathcal C$
such that
\begin{equation}\label{choice} \mathcal C>\max\left(\lambda
^{-1},2(\cl \alpha+m_1e^{m_2 T}), 2 \eta ^{-1}\right)\,,
\end{equation}
with $\eta >0$ the constant that appears in
\eqref{moins-infini-U}.

First, we take $x\in (\R^N\setminus \overline{\Omega
^{\cl}_t})\setminus\mathcal N_{\mathcal C\ep|\ln\ep|}(\Gamma
^{\cl} _t)$, i.e.
\begin{equation}\label{d-plus}
d(t,x)\geq \mathcal C \ep|\ln \ep|\,,
\end{equation}
 and prove below that $\ue(t,x)\leq  \ep$, for $t^\ep\leq t\leq
 T$. Since
 $$
 d(t,x)=d(0,x)-\cl t\,,
 $$
 we deduce from  \eqref{super-sol}, the decrease of $U$ and
 \eqref{plus-infini-U} that, for $\ep >0$ small enough, for $0\leq t\leq T$,
 $$
 \ue(t,x) \leq \widehat K U(\mathcal C |\ln \ep|)\leq \widehat K R \ep ^{\lambda \mathcal C}\leq \ep\,,
 $$
 since $\mathcal C >\lambda ^{-1}$.

Next, we take $x\in\Omega ^{\cl} _ t\setminus\mathcal N_{\mathcal
C\ep|\ln\ep|}(\Gamma ^{\cl} _t)$, i.e.
\begin{equation}\label{d-moins}
d(t,x)\leq -\mathcal C \ep|\ln \ep|\,,
\end{equation}
 and prove below that $\ue(t,x)\geq 1-2\ep $, for $t^\ep\leq t\leq
 T$. In view of \eqref{dessous} we have $\ue (t,x)\geq (U-\ep
 V)(z(t-t^\ep,x))$. Note that $d(t-t^\ep,x)=d(t,x)+\cl t^\ep$. Hence the
 choice of $\mathcal C$ implies that $z(t-t^\ep,x)\leq -\frac
 {\mathcal C}2|\ln \ep|$. Therefore, using \eqref{sub-sol-motio} and \eqref{moins-infini-U},
 we see that, for $\ep >0$
small enough,
$$
\begin{array}{lll}
 \ue(t,x) &\geq
U\left(-\frac{\mathcal C}2|\ln\ep|\right)-\ep V\left(-\frac{\mathcal C}2|\ln\ep|\right)\vsp\\
&\geq 1-Re^{-\eta \frac{\mathcal C}2 |\ln\ep|}-\ep\vsp\\
 &\geq 1-2\ep\,,
\end{array}
$$
since $\mathcal C >\di \frac 2 \eta$.

This completes the proof of Theorem \ref{th:results}.


\begin{thebibliography}{ABCD}


\bibitem{A-Duc} M.~Alfaro and A. Ducrot, {\it Sharp interface limit of the Fisher-KPP equation},
 submitted.

\bibitem{Aro-Wei1} D. G. Aronson and H. F. Weinberger, {\it Nonlinear diffusion in
population genetics, combustion, and nerve pulse propagation},
Partial differential equations and related topics (Program,
 Tulane Univ., New Orleans, La., 1974), 5--49. Lecture Notes in
 Math., Vol. 446, Springer, Berlin, 1975.


\bibitem{Bar-Eva-Sou} G.~Barles, L. C. Evans and P.~E.~Souganidis,
{\it Wavefront propagation for reaction-diffusion systems of PDE},
Duke Math. J.  {\bf 61}  (1990),  no. 3, 835--858.

\bibitem{Bar-Soug2} G.~Barles and P.~E.~Souganidis, {\it A remark on the asymptotic behavior of the solution of
the KPP equation}, C. R. Acad. Sci. Paris S\'erie I {\bf 319}
(1994), 679--684.




\bibitem{Bra} M. Bramson,
{\it Convergence of solutions of the Kolmogorov equation to
travelling waves},  Mem. Amer. Math. Soc.  {\bf 44}  (1983), no.
285, iv+190 pp.

\bibitem{C1} X.~Chen,
{\it Generation and propagation of interfaces for
reaction-diffusion equations}, J.~Differential equations {\bf 96}
(1992), 116--141.

\bibitem{Ev-Soug} L. C. Evans and P. E. Souganidis,
{\it A PDE approach to geometric optics for certain semilinear
parabolic equations}, Indiana Univ. Math. J. {\bf 38} (1989),
141--172.

\bibitem{Fish} R. A. Fisher, {\it The wave of advance of advantageous genes},
Ann. of Eugenics {\bf 7} (1937), 355--369.

\bibitem{Frie} M. I. Freidlin, {\it Limit theorems for large deviations and reaction-diffusion equations},
Ann. Probab. {\bf 13} (1985), 639--675.

\bibitem{Ham-Roq} F. Hamel and L. Roques, {\it Fast propagation
for KPP equations with slowly decaying initial conditions}, in
preparation.


\bibitem{HKLM} D. Hilhorst, R. Kersner, E. Logak and M. Mimura,
{\it Interface dynamics of the Fisher equation with degenerate
diffusion}, J. Differential Equations {\bf 244} (2008),
2872--2889.

\bibitem{Kol-Pet-Pis} A. N. Kolmogorov, I. G. Petrovsky and N. S. Piskunov, {\it Etude de
l'\'equation de la diffusion avec croissance de la quantit\'e de
mati\`ere et son application \`a un probl\`eme biologique},
Bulletin Universit\'e d'Etat  Moscou, Bjul. Moskowskogo Gos.
Univ., 1937, 1--26.

\bibitem{Lar} D. A. Larson, {\it Transient bounds and time-asymptotic behavior of solutions to nonlinear equations of Fisher
type}, SIAM J. Appl. Math. {\bf 34} (1978), 93--103.

\bibitem{Lau} K.-S. Lau, {\it On the nonlinear diffusion equation of Kolmogorov, Petrovsky,
and Piscounov},  J. Differential Equations  {\bf 59}  (1985),  no.
1, 44--70.


\bibitem{McK} H. P. McKean, {\it Application of Brownian motion to the equation
ofKolmogorov-Petrovskii-Piskunov}, Comm. Pure Appl. Math. {\bf 28}
(1975), no. 3, 323-331.

\bibitem{Rot} F. Rothe,  {\it Convergence to travelling fronts in semilinear
parabolic equations},  Proc. Roy. Soc. Edinburgh Sect. A  {\bf 80}
(1978), no. 3-4, 213--234.






\bibitem{Uch} K. Uchiyama, {\it The behavior of solutions of some nonlinear
diffusion equations for large time},  J. Math. Kyoto Univ.  {\bf
18} (1978), no. 3, 453--508.

\bibitem{Volpert-Volpert-Volpert} A.\ Volpert, V. Volpert, V. Volpert,
{\it Travelling Wave Solutions of Parabolic Systems}, Translations
of Mathematical Monographs, vol.\ 140, AMS\ Providence, RI, 1994.\

\end{thebibliography}
\end{document}